\renewcommand{\le}{\leqslant}
\renewcommand{\ge}{\geqslant}
\renewcommand{\mod}{\mathrm{mod\;\, }}
\newcommand{\RR}{\mathbb{R}}
\newcommand{\ZZ}{\mathbb{Z}}
\newcommand{\QQ}{\mathbb{Q}}
\newcommand{\CC}{\mathbb{C}}
\newcommand{\NN}{\mathbb{N}}
\newcommand{\R}{\mathbb{R}}
\newcommand{\Z}{\mathbb{Z}}
\newcommand{\Q}{\mathbb{Q}}
\newcommand{\N}{\mathbb{N}}
\newcommand{\DD}{\mathbf{D}}
\newcommand{\RRR}{\mathcal{R}}
\newtheorem{lemma}{Lemma}
\newtheorem{theorem}{Theorem}
\newtheorem{proposition}{Proposition}
\newtheorem{definition}{Definition}
\newtheorem{problem}{Problem}
\newtheorem{remark}{Remark}
\newtheorem{theorembz}{Theorem BZ2}
\newtheorem{theorembzz}{Theorem BZ1}
\newtheorem{corollary}{Corollary}
\newcommand{\hidden}[1]{}
\begin{document}

%\title{On a special Mahler function}

\title{On generalized Thue-Morse functions and their values.}

\author{
 Dzmitry Badziahin\footnote{Research partially supported by EPSRC  Grant
 EP/E061613/1}
 , Evgeny Zorin\footnote{Research partially supported by EPSRC  Grant EP/M021858/1}
}

\maketitle

\begin{abstract}
%This paper naturally extends and generalizes our previous
%work~\cite{badziahin_zorin_2014}, where we proved that the
%Thue-Morse
%constant is not badly approximable. In~\cite{badziahin_zorin_2014}, %our main result on Thue-Morse constant was grounded on the study of the generating
%we studied the generating
%function of Thue-Morse
%constant, $f_2(x)$, and this study was based on the fact that $f_2(x)$ satisfies a Mahler type functional equation. In this paper we
%consider more general Laurent series $f_d(x) = \prod_{n=0}^\infty (1
%- x^{-d^n})$, $d\in\N$, $d\geq 2$, which also satisfy a Mahler type functional equation, and study their continued
%fraction expansion. We address as well the question whether the
%corresponding Mahler numbers $f_d(a)\in\RR$, $a,d\in\N$, $a,d\geq 2$,
%are badly approximable.
This paper naturally extends and generalizes our previous
work~\cite{badziahin_zorin_2014}, where we proved that the
Thue-Morse constant is not badly approximable. Here we consider the
Laurent series $f_d(x) = \prod_{n=0}^\infty (1 - x^{-d^n})$,
$d\in\N$, $d\geq 2$ which generalize the generating function
$f_2(x)$ of the Thue-Morse number, and study their continued
fraction expansion. In particular, we show that the convergents of
$x^{-d+1}f_d(x)$ have quite a regular structure. We address as well
the question whether the corresponding Mahler numbers
$f_d(a)\in\RR$, $a,d\in\N$, $a,d\geq 2$, are badly approximable.
\end{abstract}

\section{Introduction}

Our work on the well approximability of Thue-Morse constant~\cite{badziahin_zorin_2014} exploits the functional approximations to its generating function. %of Thue-Morse constant.
Moreover, we show in~\cite{badziahin_zorin_2014} that the generating function of Thue-Morse
constant is rationally equivalent to Laurent series with a
relatively simple continued fraction. Naturally, it is desirable to
generalize methods from~\cite{badziahin_zorin_2014} to cover larger
classes of numbers and functions. In this paper we make a step in
this direction.

We consider the following functions
\begin{equation} \label{def_f_d}
f_d(x):= \prod_{t=0}^\infty (1-x^{-d^t}) \in \QQ((x^{-1})), \quad d\in\N, \quad d\geq 2,
\end{equation}
which generalize the
generating function, $f_2(x)$, of the Thue-Morse constant. We call them \emph{generalized Thue-Morse functions}.
%Surely it can be represented
By expanding the brackets in the infinite product~\eqref{def_f_d}, the function $f_d(x)$
defines an infinite Laurent series in $x^{-1}$ which is absolutely
convergent in the disc $|x|>1$. By substituting $x^d$ in place of
$x$ we obtain the following functional equation,
\begin{equation}\label{eq_funcf}
f_d(x^d) = \frac{xf_d(x)}{x-1}.
\end{equation}
Like in the classical case of real numbers, one can apply the
continued fraction algorithm to Laurent series from $\QQ((x^{-1}))$,
we discuss this in more details in Section~\ref{sec_cf}. In
particular, we can construct the continued fraction for $f_d(x)$. Its
properties were investigated by van der Poorten and many others in a
series of
papers~\cite{poorten_mendes_allouche_1991,MFvdP1989,MFvdP1991}. They
discovered quite an irregular behaviour of the sequence of partial
quotients of $f_d(x)$, see~\cite{poorten_mendes_allouche_1991}. In this
paper we show that, on the other hand, %it is more natural to consider the continued fraction of another
the function $g_d(x)$,
\begin{equation} \label{def_g}
g_d(x):= x^{-d+1}f_d(x),
\end{equation}
which is rationally dependent with $f_d(x)$, has a pretty regular continued fraction expansion, see Theorems~\ref{th1} and~\ref{th2} below.

Note that definition~\eqref{def_g} and
functional equation~\eqref{eq_funcf} together give a Mahler type functional equation for $g_d(x)$:
\begin{equation}\label{eq3_lem3}
g_d(x^d) = \frac{g_d(x)}{x^{d^2-2d}(x-1)}.
\end{equation}

In~\cite{badziahin_zorin_2014} we established a precise recurrent
formula for the sequence of partial quotients $a_i(x)\in\Q[x]$,
$i\in\N$ of $g_2(x)$.
%In the case $d=2$ we got the precise recurrent formula for
%CF of $f_2(x)$ in~\cite{badziahin_zorin_2014}.
Here we generalize this result to get more general properties of
continued fraction of $g_d(x)$ for the other integer values $d\geq
3$. For instance, we manage to provide a nice description of
convergents to $g_d(x)$, which is done in Theorem~\ref{th1}. In
Section~\ref{sec_bad} we make this description %even more precise
completely explicit
for the values $d$ such that $f_d(x)$ is so called badly
approximable, see Theorem~\ref{th2}.
\hidden{
In particular, Theorem~\ref{th2} computes the partial
quotients of $g_d(x)$ up to multiplication by real constants. For
$d=2$ these constants were computed in~\cite{badziahin_zorin_2014}.
In Section~\ref{sec_d3} we discuss the recurrent formulae for these
constants in the case $d=3$. Finally, in
Proposition~\ref{proposition_wa} we show that in the case $d\ge 4$,
$f_d(x)$ is not badly approximable and therefore Theorem~\ref{th2}
does not apply to these values $d$.
%cover as well all the cases $d\geq 3$. This is done in Theorem~\ref{th2}.
}

Also, in this paper we investigate the question whether $f_d(a)$ is
badly approximable for given integer values of $a$ and $d$ with
$a,d\ge 2$.
%for which values of $a,d\in\Z$, $a,d\geq 2$, the number $f_d(a)$ is well approximable.
Recall that a number $x\in\R$ is said to be badly approximable if
there exists a positive constant $c=c(x)>0$ such that
$$
0<\left|x-p/q\right|\ge {c}/{q^2}
$$
for all integers $p,q$ with $q\neq 0$. Equivalently, the number
$x\in\R$ is badly approximable if and only if all its partial
quotients are uniformly upper bounded.

We explain in Subsection~\ref{subsection_dgeq4} that, for a trivial
reason, $f_d(a)$ is not badly approximable for $d\geq 4$. In
Subsections~\ref{subsection_deq2} and~\ref{subsection_deq3} we
provide the results which allow to verify that for a given integer
$a\geq 2$ the numbers $f_2(a)$ and $f_3(a)$ are not badly
approximable.  In particular, these results generalize the theorem
from~\cite{badziahin_zorin_2014} concerning $f_2(a)$. They remove a
principal obstacle which did not allow to apply that theorem to the
whole set of integers $a\geq 2$, this obstacle is explained in the
discussion after Corollary~16 in~\cite{badziahin_zorin_2014}.

\section{Some definitions and preparatory results on functional continued
fractions}\label{sec_cf}

\begin{definition}
We will denote by $\|u(x)\|$ the \emph{degree} of Laurent series
$u(x)\in\QQ((x^{-1}))$, that is the biggest degree having a non-zero
coefficient in the Laurent series $u(x)$. In case if $u(x)$ is a
polynomial in $x$, this definition of degree coincides with the
classical definition of degree of a polynomial.
\end{definition}
\begin{definition}
Let $p(x)/q(x)$ be a rational function and $u(x)$ be a Laurent series. We say that an integer $c$ is \emph{the rate of approximation} of $p(x)/q(x)$ to $u(x)$ if
$$
\left\|u(x)-p(x)/q(x)\right\|=-2\|q(x)\|-c.
$$
\end{definition}
%\begin{remark}
{\noindent \bf Remark} It is easy to verify with a bit of linear
algebra that for any Laurent series $u(x)$ and any $n\in\NN$ there
exist polynomials $p_n(x)$ and $q_n(x)$ such that $\deg q_n\leq n$
and the rate of approximation of  $p_n(x)/q_n(x)$ to $u(x)$ is at
least $2n-2\deg q_n+1\geq 1$.
%\end{remark}

In this paper, we will extensively use the apparatus of continued
fractions. It is well known that Laurent series admit the continued
fraction construction analogous to that for real numbers,
\begin{equation}\label{f_d_cf}
u(x) = [a_0(x); a_1(x),a_2(x),\ldots,\cdots],
\end{equation}
where $a_i(x)$ are non-zero polynomials, $i\in\N$.
%%ADD SOME BASIC PROPERTIES OF CONTINUED FRACTIONS FOR LAURENT SERIES WE WILL BE USING LATER
%With the reference to~\eqref{f_d_cf},
\emph{The $n$-th convergent} to $u(x)$, $n\in\N$, is defined to be
the following rational function:
\begin{equation}\label{f_d_cf_finite}
p_n(x)/q_n(x) = [a_0(x); a_1(x),\ldots,a_n(x)]
\end{equation}
where the rational function $p_n(x)/q_n(x)$ is taken in its reduced
form. In some situations we will need to precise Laurent series
which we approximate, so we denote by $p_{n,u}(x)/q_{n,u}(x)$ the
$n$'th convergent to Laurent series $u(x)$. Similarly, we denote by
$a_{i,u}$ the $i$th partial quotient of Laurent series $u(x)$.
%where $n\in\N$, then we find a rational fraction $p_n(x)/q_n(x)$ which is called \emph{$n$-th the best approximant} to $f_d(x)$. %Similarly to the case of the real numbers,
%Note that convergents $\frac{p_{n,f}}{q_{n,f}}$, $n\in\N$, of a Laurent series $f(x)$, are the only rational fractions having strictly positive rate of approximation to $f(x)$ (see~\cite[Proposition~1]{vdP}).

The set of convergents to Laurent series $p_n(x)/q_n(x)$, $n\in\N$,
enjoys many nice properties similar to the properties of convergents
to the real numbers. So, the rational fraction $p_n(x)/q_n(x)$
approximates $u(x)$ with the rate of approximation $\deg
a_{n+1}(x)$. Also, convergents $p_n(x)/q_n(x)$ are precisely the
rational fractions having strictly positive rate of approximation to
$u(x)$ (see~\cite[Proposition~1]{vdP}). Numerators and denominators
of consecutive convergents enjoy the following recursive relations:
\begin{equation} \label{eq_converg}
\begin{aligned}
p_{n+1}(x)&=a_{n+1}(x)p_n(x)+p_{n-1}(x),\\
q_{n+1}(x)&=a_{n+1}(x)q_n(x)+q_{n-1}(x).
\end{aligned}
\end{equation}
We refer the reader to a nice paper~\cite{vdP} by van der Poorten for a more detailed account on continued fractions of formal power series.

Note that $p_n(x)$ and $q_n(x)$ are defined up to a multiplication
by a non-zero constant. Sometimes for convenience we want to get the
convergents $\hat{p}_n(x)/\hat{q}_n(x)$ such that $\hat{q}_n(x)$ is
monic. In that case~\eqref{eq_converg} should be modified to make
sure that the resulting polynomial $\hat{q}_{n+1}(x)$ remains monic:
\begin{equation} \label{eq_converg_mon}
\begin{aligned}
\hat{p}_{n+1}(x)&=\hat{a}_{n+1}(x)\hat{p}_n(x)+\beta_{n+1}\hat{p}_{n-1}(x),\\
\hat{q}_{n+1}(x)&=\hat{a}_{n+1}(x)\hat{q}_n(x)+\beta_{n+1}\hat{q}_{n-1}(x).
\end{aligned}
\end{equation}
where we define, with $\rho_n$ denoting the leading coefficient of
$q_n(x)$,
$$
\hat{a}_{n+1}(x) = \frac{a_{n+1}(x)\rho_n}{\rho_{n+1}};\quad
\beta_{n+1} = \frac{\rho_{n-1}}{\rho_{n+1}}.
$$

Below we prove two lemmata which provide two different sources of
convergents to the function $g_d(x)$, defined by~\eqref{def_g}.
\begin{lemma}\label{lem1}
Let $h_d(x):= x^{-1}f_d(x)$. If $p(x)/q(x)$ is a convergent to
$h_d(x)$ with the rate of approximation $c$ then
$\frac{(x-1)p(x^d)}{q(x^d)}$ is a convergent to $g_d(x)$ with the
rate of approximation at least $cd-1$. Moreover, this rate of approximation is precisely $cd-1$ if and only if $(x-1)\nmid q(x)$.
\end{lemma}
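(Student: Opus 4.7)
\textbf{Proof plan for Lemma~\ref{lem1}.}

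The plan is to reduce the claim to a direct computation using a simple functional relation between $g_d$ and $h_d$. First, I would combine the functional equation~\eqref{eq_funcf} for $f_d$ with the definitions $g_d(x)=x^{-d+1}f_d(x)$ and $h_d(x)=x^{-1}f_d(x)$ to derive the identity
$$
g_d(x) \;=\; (x-1)\,h_d(x^d).
$$
Indeed, $h_d(x^d)=x^{-d}f_d(x^d)=x^{-d}\cdot \frac{xf_d(x)}{x-1}=\frac{g_d(x)}{x-1}$. This identity is the engine of the proof: approximations to $h_d$ transport, via the substitution $x\mapsto x^d$ and multiplication by $(x-1)$, into approximations to $g_d$.

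Next, starting from the hypothesis $\|h_d(x)-p(x)/q(x)\| = -2\|q(x)\|-c$, I would substitute $x\mapsto x^d$; since $\|u(x^d)\|=d\|u(x)\|$ for every Laurent series, this yields
$$
\bigl\|h_d(x^d)-p(x^d)/q(x^d)\bigr\| \;=\; -2d\|q(x)\|-cd.
$$
Multiplying through by $(x-1)$ (which adds $1$ to the degree) and using the identity above produces
$$
\Bigl\|g_d(x)-\tfrac{(x-1)p(x^d)}{q(x^d)}\Bigr\| \;=\; -2d\|q(x)\|-cd+1. \qquad (\star)
$$

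Now I would analyze the reduced form of the candidate fraction, splitting into two cases according to whether $(x-1)\mid q(x)$. For this I first note that if $p,q$ are coprime in $\QQ[x]$ then so are $p(x^d),q(x^d)$ (apply the substitution $x\mapsto x^d$ to a B\'ezout relation). In \emph{Case A}, $(x-1)\nmid q(x)$ so $q(1)\neq 0$, whence $(x-1)$ is coprime to $q(x^d)$; combined with the coprimality of $p(x^d),q(x^d)$ the fraction $(x-1)p(x^d)/q(x^d)$ is already in lowest terms with denominator of degree $d\|q(x)\|$, and $(\star)$ gives rate exactly $cd-1$. In \emph{Case B}, $q(x)=(x-1)\tilde q(x)$, so $q(x^d)=(x-1)(x^{d-1}+\cdots+1)\tilde q(x^d)$ and after cancelling the common factor $(x-1)$ the reduced denominator has degree $d\|q(x)\|-1$; rewriting $(\star)$ in terms of this smaller denominator gives rate at least $cd+1>cd-1$. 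This handles both halves of the "moreover" statement.

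Finally, since in both cases the rate of approximation obtained is strictly positive (we have $c\ge 1$ for any convergent and $d\ge 2$), the characterization recalled in the paper---that convergents are exactly the rational fractions with strictly positive rate of approximation, see~\cite[Proposition~1]{vdP}---shows that $(x-1)p(x^d)/q(x^d)$ is indeed a convergent to $g_d(x)$. The only mildly delicate step is the bookkeeping of the degree of the reduced denominator in Case~B (and checking that the $(x-1)$ factor is the sole source of cancellation that needs to be tracked for the "iff"); everything else is a one-line degree computation.
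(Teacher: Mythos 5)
Your proposal is correct and follows essentially the same route as the paper: the identity $h_d(x^d)=g_d(x)/(x-1)$, the substitution $x\mapsto x^d$, multiplication by $(x-1)$, and the observation that cancellation occurs exactly when $(x-1)\mid q(x)$. Your explicit Case~B degree count (showing the rate jumps to $cd+1$ after cancelling $(x-1)$) and the check that the rate is positive are slightly more detailed than the paper's version, but the argument is the same.
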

\proof We simply use the following functional relation:
\begin{equation} \label{lem1_eq1_1}
h_d(x^d) =
x^{-d}f_d(x^d) = \frac{ g_d(x) }{ (x-1) }.
\end{equation}
If
\begin{equation} \label{lem1_eq1}
\left\|h_d(x) - \frac{p(x)}{q(x)}\right\| = -2\|q(x)\| -
c,
\end{equation}
then by substituting $x^d$ in place of $x$ in~\eqref{lem1_eq1} and
by using~\eqref{lem1_eq1_1} we find:
\begin{equation} \label{lem1_eq2}
\left\|h_d(x^d) - \frac{p(x^d)}{q(x^d)}\right\| =
\left\|\frac{g_d(x)}{x-1} - \frac{p(x^d)}{q(x^d)}\right\|
= -2d\|q(x)\| - dc,
\end{equation}
hence, by multiplying both sides of~\eqref{lem1_eq2} by $x-1$,
$$
\left\|g_d(x) - \frac{(x-1)p(x^d)}{q(x^d)}\right\| =
-2\|q(x^d)\| - dc+1.
$$
Note that the rate of approximation of the convergent
$\frac{(x-1)p(x^d)}{q(x^d)}$ to $g_d(x)$ exactly equals $dc-1$ as
soon as $\gcd((x-1)p(x^d), q(x^d)) = 1$. Since $p(x)$ and $q(x)$ are
coprime by the definition of a convergent, this is equivalent to
$(x-1)\nmid q(x)$.
\endproof

\begin{lemma}\label{lem2}
Let $u_d(x):= (1-x^{-1}) f_d(x)$. If $p(x)/q(x)$ is a convergent to
$u_d(x)$ with the rate of approximation $c$, then
$$
\frac{p^*(x)}{q^*(x)}:=
\frac{p(x^d)}{(1+x+x^2+\cdots+x^{d-1})q(x^d)}
$$
is a convergent to $g_d(x)$ with the rate of approximation
$d(c-1)+1$. Moreover, this rate of approximation is precisely
$d(c-1)+1$ if and only if $(x-1)\nmid p(x)$.
\end{lemma}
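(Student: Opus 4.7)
The approach mirrors exactly that of Lemma~\ref{lem1}, with $h_d$ replaced by $u_d$ and the scalar correction factor changed to $1+x+\cdots+x^{d-1}$. The plan is to first derive the functional identity that plays the role of~\eqref{lem1_eq1_1}, then propagate the approximation through the substitution $x\mapsto x^d$, and finally analyze when the resulting fraction is already in reduced form.

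First I would establish the analog of~\eqref{lem1_eq1_1}. From the definition $u_d(x)=(1-x^{-1})f_d(x)$ together with the functional equation~\eqref{eq_funcf}, one computes
$$
u_d(x^d)=(1-x^{-d})f_d(x^d)=(1-x^{-d})\cdot\frac{xf_d(x)}{x-1}=\frac{(x^d-1)f_d(x)}{x^{d-1}(x-1)}=(1+x+\cdots+x^{d-1})\,g_d(x),
$$
since $x^{-d+1}f_d(x)=g_d(x)$ by~\eqref{def_g}. Thus $g_d(x)=u_d(x^d)/(1+x+\cdots+x^{d-1})$, which is the key identity.

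Next, starting from $\|u_d(x)-p(x)/q(x)\|=-2\|q(x)\|-c$, I would substitute $x^d$ for $x$ (so degrees get multiplied by $d$) to get
$$
\left\|u_d(x^d)-\frac{p(x^d)}{q(x^d)}\right\|=-2d\|q(x)\|-dc,
$$
and then divide through by $1+x+\cdots+x^{d-1}$, a polynomial of degree $d-1$, which shifts the degree of the error by $-(d-1)$. This yields
$$
\left\|g_d(x)-\frac{p^*(x)}{q^*(x)}\right\|=-2d\|q(x)\|-dc-(d-1).
$$
If the written fraction $p^*/q^*$ is already in reduced form, then $\|q^*(x)\|=d\|q(x)\|+(d-1)$ and the rate of approximation equals $dc-(d-1)=d(c-1)+1$. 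Since $c\geq 1$ (convergents have positive rate of approximation), this rate is $\geq 1$, so by the characterization recalled after~\eqref{eq_converg} the fraction is indeed a convergent to $g_d(x)$.

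The main point requiring care is the reducedness of $p^*/q^*$, which gives the ``if and only if'' clause. Because $\gcd(p(x),q(x))=1$, any common root of $p(x^d)$ and $q(x^d)$ would give (via $\alpha\mapsto\alpha^d$) a common root of $p,q$, so $\gcd(p(x^d),q(x^d))=1$. Thus the only possible cancellation is between $p(x^d)$ and $1+x+\cdots+x^{d-1}$. The roots of the latter are the nontrivial $d$-th roots of unity $\zeta$, and $p(\zeta^d)=p(1)$; hence cancellation occurs precisely when $p(1)=0$, i.e.\ $(x-1)\mid p(x)$. When $(x-1)\nmid p(x)$ the rate is exactly $d(c-1)+1$, as required. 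When $(x-1)\mid p(x)$, writing $p(x)=(x-1)\tilde p(x)$ gives $p(x^d)=(x-1)(1+x+\cdots+x^{d-1})\tilde p(x^d)$, so $p^*/q^*$ simplifies to $(x-1)\tilde p(x^d)/q(x^d)$ (reduced, using $(x-1)\nmid q$), whose denominator has degree $d\|q\|$; the rate jumps to $dc+(d-1)>d(c-1)+1$. This confirms that equality of rates holds if and only if $(x-1)\nmid p(x)$. I expect the only subtle step to be verifying this reducedness analysis precisely; the degree bookkeeping is routine once the functional identity is in place.
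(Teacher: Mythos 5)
Your proposal is correct and follows essentially the same route as the paper: the identity $u_d(x^d)=(1+x+\cdots+x^{d-1})g_d(x)$, the substitution $x\mapsto x^d$, division by $1+x+\cdots+x^{d-1}$, and the observation that $(x-1)\nmid p(x)$ is exactly the condition for $\gcd(p^*,q^*)=1$. Your explicit treatment of the case $(x-1)\mid p(x)$ (showing the rate jumps to $dc+d-1$) supplies the converse direction of the ``if and only if'' that the paper leaves implicit, but the argument is otherwise identical.
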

\proof The proof is very similar to the proof of Lemma~\ref{lem1}. We firstly
observe that
$$
u_d(x^d) = (1-x^{-d})f_d(x^d) = (1+x+\cdots+x^{d-1})g_d(x).
$$
If
$$
\left\|u_d(x) - \frac{p(x)}{q(x)}\right\| = -2\|q(x)\| - c
$$
then
$$
\left\|\frac{u_d(x^d)}{1+x+\cdots+x^{d-1}} -
\frac{p(x^d)}{(1+x+\cdots+x^{d-1})q(x^d)}\right\|
$$$$
=\left\| g_d(x) - \frac{p^*(x)}{q^*(x)}\right\| =
-2d\|q(x)\| - dc - (d-1).
$$
Finally, the equality $\|q^*(x)\| = d\|q(x)\|+ d-1$ completes the
proof of the first part of the lemma.

If $(x-1)\nmid p(x)$ then $\gcd(p(x^d), 1+x+\ldots+x^{d-1}) = 1$ and
therefore $\gcd(p^*(x), q^*(x)) = 1$. Hence the rate of
approximation of the convergent $p^*(x) / q^*(x)$ to $g_d(x)$ is
exactly $d(c-1)+1$.
\endproof

\hidden{
We can make the analogous observation as after Lemma~\ref{lem1}: if
$(x-1)\nmid p(x)$ then $\gcd(p^*(x), q^*(x)) = 1$ and then rate of
approximation of the convergent $p^*(x) / q^*(x)$ is exactly
$d(c-1)+1$.
}

In fact, two collections of convergents of $g_d(x)$ provided by
Lemmata~\ref{lem1} and~\ref{lem2} cover the set of all the
convergents of $g_d(x)$. We prove this in Theorem~\ref{th1} below.
Beforehand we need one more technical lemma.
\begin{lemma} \label{lem33}
Let functions $h_d(x)$ and $u_d(x)$ be as defined in Lemmata~\ref{lem1} and~\ref{lem2} respectively.
\begin{enumerate}
\item If $\frac{p(x)}{q(x)}$ approximates $u_d(x)$ with the rate of approximation $c$, then $\frac{p(x)}{(x-1)q(x)}$ approximates $h_d(x)$ with the rate of approximation at least $c-1$.
\item If $\frac{p(x)}{q(x)}$ approximates $h_d(x)$ with the rate of approximation $c$, then $\frac{(x-1)p(x)}{q(x)}$ approximates $u_d(x)$ with the rate of approximation at least $c-1$.
\end{enumerate}
\begin{proof}
\begin{enumerate}
\item Note that $u_d(x) = (x-1)h_d(x)$. Then we
have
\begin{equation} \label{lem3_h_d_u_d}
\begin{aligned}
\left\| h_d(x) - \frac{p(x)}{(x-1)q(x)}\right\| &=
\left\| (x-1)^{-1}\left( u_d(x) -
\frac{p(x)}{q(x)}\right)\right\| \\
&= -2 \|q(x)\| - c -1 \\
&%\qquad\qquad
= -2 \|(x-1)q(x)\| - c+1,
\end{aligned}
\end{equation}
which proves the first claim.
\item Roughly speaking, we reverse the order of calculations in~\eqref{lem3_h_d_u_d}:
\begin{equation} \label{lem3_u_d_h_d}
\begin{aligned}
\left\| u_d(x) - \frac{(x-1)p(x)}{q(x)}\right\| &=
\left\| (x-1)\left( h_d(x) -
\frac{p(x)}{q(x)}\right)\right\| \\
&%\qquad\qquad
=\left\| h_d(x) -
\frac{p(x)}{q(x)}\right\|+1\\
&= -2 \|q(x)\| - c + 1.
\end{aligned}
\end{equation}
This proves the second claim of the lemma, hence completes the proof.
\end{enumerate}
\end{proof}
\end{lemma}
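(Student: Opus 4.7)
The plan is to exploit the elementary identity $u_d(x) = (x-1)h_d(x)$, which follows immediately from the definitions since
\[
u_d(x) = (1-x^{-1})f_d(x) = (x-1)\cdot x^{-1} f_d(x) = (x-1)h_d(x).
\]
Once this identity is in hand, both claims become one-line degree computations, with the only subtlety being that the ratios $p(x)/((x-1)q(x))$ and $(x-1)p(x)/q(x)$ are not guaranteed to be in reduced form, which is exactly what forces the conclusion to be \emph{at least} $c-1$ rather than an equality.

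For part (1), I would start from the hypothesis $\|u_d(x)-p(x)/q(x)\| = -2\|q(x)\|-c$, factor out $(x-1)^{-1}$ via the identity above, and compute
\[
\Bigl\|h_d(x)-\tfrac{p(x)}{(x-1)q(x)}\Bigr\| = \|u_d(x)-p(x)/q(x)\| - \|x-1\| = -2\|q(x)\|-c-1.
\]
Rewriting the right-hand side as $-2\|(x-1)q(x)\|-(c-1)$ gives a rate of approximation equal to $c-1$ for the (possibly non-reduced) fraction $p(x)/((x-1)q(x))$. Any common factor between numerator and denominator can only shrink the degree of the denominator, and hence only increase the rate, giving the desired lower bound $c-1$.

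Part (2) is the symmetric computation: multiply by $(x-1)$ instead of dividing. Starting from $\|h_d(x)-p(x)/q(x)\|=-2\|q(x)\|-c$, the identity yields
\[
\Bigl\|u_d(x)-\tfrac{(x-1)p(x)}{q(x)}\Bigr\| = \|h_d(x)-p(x)/q(x)\|+1 = -2\|q(x)\|-(c-1),
\]
so the pair $((x-1)p(x), q(x))$ achieves rate $c-1$, which again can only improve after reducing the fraction.

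There is no real obstacle here; the only thing one has to be careful about is the bookkeeping of the denominator's degree when it changes (it grows by one in part (1) and stays the same in part (2)) and the resulting re-expression of the bound in the canonical form $-2\|\text{denom}\|-\text{rate}$. The "at least" wording in both statements is explained simply by the fact that the constructed fraction need not be reduced, so the stated rate is a valid lower bound rather than always an equality.
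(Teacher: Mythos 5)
Your proposal is correct and follows essentially the same route as the paper: both rest on the identity $u_d(x)=(x-1)h_d(x)$ and the resulting one-line degree computations, rewriting the bound in the form $-2\|\mathrm{denominator}\|-(c-1)$. Your extra remark that cancellation in the non-reduced fraction can only increase the rate (hence the ``at least'') is a correct and welcome clarification of a point the paper leaves implicit.
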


%Surely the convergents of $g_d(x)$ coming from Lemmata~\ref{lem1}
%and~\ref{lem2} are distinct. Indeed in the first instance the
%denominator of a convergent is a polynomial of $x^d$, but in the
%second case it is not because it is of the form
%$(1+x+\cdots+x^{d-1})q(x^d)$. On the other hand it appears that all
%these two collections of convergents exhaust

\begin{theorem}\label{th1}
Every convergent of $g_d(x)$ is either of the form given in
Lemma~\ref{lem1} or in the form given in Lemma~\ref{lem2}. More precisely, %for any convergent $\frac{p_{m,g_d}}{q_{m,g_d}}$, $m\in\N$, of $g_d(x)$ %one of the following two possibilities holds true:
let $\frac{p_{m,g_d}}{q_{m,g_d}}$, $m\in\N$, be a convergent to $g_d(x)$. Then,
\begin{enumerate}
\item If $m$ is odd, then there exists $t\in\N$ such that the $t$-th convergent $\frac{p_{t,u_d}}{q_{t,u_d}}$ to $u_d(x)$, where $u_d(x)$ is defined in Lemma~\ref{lem2}, verifies
\begin{equation} \label{th1_case_lem2}
\frac{p_{m,g_d}}{q_{m,g_d}} =
\frac{p_{t,u_d}(x^d)}{(1+x+\cdots+x^{d-1})q_{t,u_d}(x^d)}\quad\mbox{with }(x-1)\nmid
p_{t,u_d}(x)
\end{equation}
\item If $m$ is even, then there exists $t\in\N$ such that the $t$-th convergent $\frac{p_{t,h_d}}{q_{t,h_d}}$ to $h_d(x)$, where $h_d(x)$ is defined in Lemma~\ref{lem1}, verifies
\begin{equation} \label{th1_case_lem1}
\frac{p_{m,g_d}}{q_{m,g_d}} =
\frac{(x-1)p_{t,h_d}(x^d)}{q_{t,h_d}(x^d)}\quad\mbox{with }(x-1)\nmid
q_{t,h_d}(x)
\end{equation}
\end{enumerate}
\end{theorem}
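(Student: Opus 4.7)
The proof classifies each convergent $P/Q$ of $g_d(x)$ by the divisibility of $Q(x)$ by the polynomial $1+x+\cdots+x^{d-1}$, and uses a Galois-type symmetry argument in each case. First, note that a convergent of Lemma~\ref{lem2} form manifestly has $1+x+\cdots+x^{d-1} \mid Q$, whereas one of Lemma~\ref{lem1} form with the stated condition $(x-1)\nmid q$ satisfies $1+x+\cdots+x^{d-1} \nmid q(x^d) = Q$, since $1+x+\cdots+x^{d-1} \mid q(x^d)$ would force $q(1)=0$. These two conditions are complementary, so it suffices to show that every convergent of $g_d$ takes one of the two forms according to this divisibility.

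Suppose $1+x+\cdots+x^{d-1} \mid Q$, and write $Q = (1+x+\cdots+x^{d-1})Q_1$. Then $P/Q_1$ approximates $u_d(x^d) = (1+x+\cdots+x^{d-1})g_d(x)$ with strictly positive rate, hence is a convergent of $u_d(x^d)$. Because $u_d(x^d)$ is a Laurent series in $x^{-d}$ (so its partial quotients in the $x$-variable continued fraction are polynomials in $x^d$), its convergents over $\mathbb{Q}[x]$ are exactly of the form $p(x^d)/q(x^d)$ with $p/q$ a convergent of $u_d(y) \in \mathbb{Q}((y^{-1}))$; this is confirmed rigorously by a Galois argument using invariance of $u_d(x^d)$ under $x \mapsto \zeta x$ for $\zeta^d=1$ combined with uniqueness of convergents up to scalar. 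Thus $P(x) = p(x^d)$, $Q_1(x) = q(x^d)$, and coprimality gives $(x-1)\nmid p$; this is precisely Lemma~\ref{lem2} form. The complementary case $1+x+\cdots+x^{d-1} \nmid Q$ is treated analogously via $g_d(x) = (x-1)h_d(x^d)$: the quotient $P/((x-1)Q)$ approximates $h_d(x^d)$, and for rate $c \geq 2$ the same structural result yields $P/((x-1)Q) = \tilde p(x^d)/\tilde q(x^d)$. A short $\gcd$ analysis excludes $(x-1)\nmid P$ (which would force $1+x+\cdots+x^{d-1}\mid Q$, contradicting the hypothesis), so $(x-1)\mid P$ and $Q = \tilde q(x^d)$, giving Lemma~\ref{lem1} form with $(x-1)\nmid \tilde q$.

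The parity assertion follows by combining Lemmas~\ref{lem1}, \ref{lem2}, and~\ref{lem33}: Lemma~\ref{lem33} sets up a bijection between convergents of $h_d$ and $u_d$ via multiplication/division by $(x-1)$, under which Lemmas~\ref{lem1} and~\ref{lem2} produce convergents of $g_d$ of denominator degrees $d\,\|q_{t,h_d}\|$ or $d\,\|q_{t,h_d}\| - 1$, depending on whether $(x-1) \nmid q_{t,h_d}$. Matching the resulting rates $dc-1$ (Lemma~\ref{lem1}) and $dc+1$ (Lemma~\ref{lem2}) against the gaps between consecutive convergents of $g_d$ forces the $(x-1)$-divisibility of $q_{t,h_d}$ to alternate in $t$, producing the interleaving of the two forms with the claimed parity. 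The initial convergent $p_1/q_1 = 1/(1+x+\cdots+x^{d-1})$ (readily computed from $g_d(x) = x^{1-d}(1-x^{-1}-x^{-d}+\cdots)$) is of Lemma~\ref{lem2} form at $m=1$, setting the starting parity. The main obstacle is the boundary rate-one case in the Lemma~\ref{lem1} analysis, where the Galois argument on $h_d(x^d)$ does not directly apply; this is closed by the rate-alternation argument together with the observation that for $d \geq 3$ all rate-one convergents of $g_d$ must arise from Lemma~\ref{lem2} (since Lemma~\ref{lem1} yields rates at least $d-1 \geq 2$), hence satisfy $1+x+\cdots+x^{d-1}\mid Q$.
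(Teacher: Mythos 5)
Your overall strategy (classify each convergent $P/Q$ of $g_d$ by whether $1+x+\cdots+x^{d-1}$ divides $Q$, and use the fact that the convergents of a series in $x^{-d}$ such as $u_d(x^d)$ or $h_d(x^d)$ are exactly the substitutions $p(x^d)/q(x^d)$ of the convergents of $u_d$, resp.\ $h_d$) is genuinely different from the paper's, which proceeds by induction on $m$ and transfers information back and forth between $u_d$ and $h_d$ via Lemma~\ref{lem33}. Your two main cases are handled correctly: when $1+x+\cdots+x^{d-1}\mid Q$ the transfer to $u_d(x^d)$ has rate $c+d-1\ge 2$ and everything goes through, and when $1+x+\cdots+x^{d-1}\nmid Q$ and the rate $c$ of $P/Q$ is at least $2$, your $\gcd$ argument forcing $(x-1)\mid P$ and then $(x-1)\nmid \tilde q$ is sound. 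This part of your argument is arguably cleaner than the paper's.

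However, the boundary case you defer to the end --- a convergent with rate $c=1$ whose denominator is not divisible by $1+x+\cdots+x^{d-1}$ --- is a genuine gap, and it is exactly the case on which the paper spends most of its effort (Case~2 of its induction, where one must \emph{prove}, by a contradiction argument using Lemma~\ref{lem33} and a degree count, that $h_d$ possesses a convergent whose denominator has the one specific degree needed). Your proposed closure is circular: the assertion that ``all rate-one convergents of $g_d$ must arise from Lemma~\ref{lem2}'' is a consequence of the theorem being proved, not an independent observation; knowing that Lemma~\ref{lem1} \emph{produces} only convergents of rate $\ge d-1$ does not by itself exclude a rate-one convergent lying outside both families. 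Moreover, even granting that reasoning, it fails for $d=2$, where $d-1=1$ and Lemma~\ref{lem1} does produce rate-one convergents, so the problematic configuration is genuinely realized there and the theorem is claimed for all $d\ge 2$. Finally, the parity/interleaving assertion (odd $m$ gives the $u_d$-form, even $m$ the $h_d$-form) is only sketched via a ``rate-alternation'' heuristic; establishing the strict alternation requires precisely the correspondence between consecutive convergents of $u_d$ and $h_d$ that the paper's induction (via Lemma~\ref{lem33} and the degree computations~\eqref{th1_deg_qmp1}--\eqref{th1_case2_eq1}) carries out. To complete your proof you would need to import an argument of that type for the $c=1$ case and for the alternation.
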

\proof %Denote by $r_n(x) / s_n(x), n\in\ZZ_{\ge 0}$ the sequence of convergents of $g_d(x)$. We use the induction.
We prove by induction. One can readily verify that the first two
convergents of $g_d(x)$ are $p_{0,g_d}(x)/q_{0,g_d}(x) = 0/1$ and
$p_{1,g_d}(x)/q_{1,g_d}(x) = 1/(1+x+\cdots+x^{d-1})$. The first one
is generated by Lemma~\ref{lem1} from the convergent $0/1$ to
$h_d(x)$ and the second one is generated by Lemma~\ref{lem2} from
the convergent $1/1$ to $u_d(x)$. therefore the zeroth and the first
convergents of $g_d(x)$ satisfy~\eqref{th1_case_lem1}
and~\eqref{th1_case_lem2} respectively.

%Assume that for some value $m\in \NN$ the convergent $p_{m,g_d}(x) /
%q_{m,g_d}(x)$ of $g_d(x)$ is generated by Lemma~\ref{lem2}:
%$$
%p_{m,g_d}(x) /
%q_{m,g_d} =
%\frac{p_{t,u_d}(x^d)}{(1+x+\cdots+x^{d-1})q_{t,u_d}(x^d)}\quad\mbox{with }(x-1)\nmid
%p_{t,u_d}(x),
%$$
%where $p_{t,u_d}(x)/q_{t,u_d}(x)$ is the $t$-th convergent of $u_d(x)$, $t\in\N$.
Assume that for an odd $m\in\N$ we have~\eqref{th1_case_lem2}.
Denote by $c$ the rate of approximation of $u_d(x)$ by
$p_{t,u_d}(x)/q_{t,u_d}(x)$.
Then %and the remark afterwards
%we have that
by Lemma~\ref{lem2} the rate of approximation of $g_d(x)$ by
$\frac{p_{m,g_d}}{q_{m,g_d}}$  is $d(c-1)+1$. Moreover, by the
general property of continued fractions this rate of approximation
also equals to $\|a_{m+1,g_d}\|$, hence $\|a_{m+1,g_d}\|=d(c-1)+1$.
Further, \eqref{eq_converg} implies
\begin{equation} \label{th1_deg_qmp1}
\|q_{m+1, g_d}(x)\| = \|q_{m, g_d}(x)\|+\|a_{m+1, g_d}(x)\| =
\|q_{m, g_d}(x)\|+ d(c-1)+1 = d\|q_{t,u_d}(x)\| + dc.
\end{equation}
Also,~\eqref{eq_converg} together with the definition of $c$ imply
\begin{equation} \label{th1_deg_qtp1}
\|q_{t+1, u_d}(x)\| = \|q_{t, u_d}(x)\|+\|a_{t+1, u_d}(x)\| = \|q_{t, u_d}(x)\|+ c
\end{equation}
Now consider two cases, $c\geq 2$ and $c=1$.

{\bf Case 1.} $c\ge 2$. By Lemma~\ref{lem33}, $\frac{p_{t,u_d}(x)}{(x-1)q_{t,u_d}(x)}$ is a convergent to $h_d(x)$, let us say it is the $s$-th convergent to  $h_d(x)$,
\begin{equation} \label{th1_case1_eq1}
\frac{p_{s,h_d}(x)}{q_{s,h_d}(x)}=\frac{p_{t,u_d}(x)}{(x-1)q_{t,u_d}(x)}.
\end{equation}
Moreover, the numerator and denominator of
$\frac{p_{t,u_d}(x)}{(x-1)q_{t,u_d}(x)}$ are coprime, since
$(x-1)\nmid p_{t,u_d}(x)$. Therefore the rate of approximation of
$h_d(x)$ by $s$-th convergent equals $c-1$. The latter fact implies
that the next convergent $\frac{p_{s+1,h_d}(x)}{q_{s+1,h_d}(x)}$ to
$h_d(x)$ satisfies
\begin{equation} \label{th1_case1_eq1}
\|q_{s+1,h_d}(x)\| = \|q_{s,h_d}(x)\|+c-1 = \|q_{t,u_d}(x)\| + c.
\end{equation}

Note that $(x-1)\nmid q_{s+1,h_d}(x)$. Indeed,
\eqref{th1_case1_eq1} implies that $q_{s,h_d}(x)$ is divisible by $x-1$ and by a general property of continued fractions ${\rm gcd}(q_{s+1,h_d}(x),q_{s,h_d}(x))=1$.
%otherwise
%Lemma~\ref{lem33} implies that
%$\frac{p_{s+1,h_d}(x)}{q_{s+1,h_d}(x)(x-1)^{-1}}$ is a convergent
%of $u_d(x)$ with $\|q_{s+1,h_d}(x)(x-1)^{-1}\| \stackrel{\eqref{th1_case1_eq1}}= ||q_{t,u_d}(x)||+c-1$, %which contradicts to the fact that the next convergent of $u_d(x)$ must have the degree of the denominator equal to $||q_{t,u_d}(x)||+c$.
%which contradicts~\eqref{th1_deg_qtp1}.

Finally, we have by Lemma~\ref{lem1} that $\frac{(x-1)p_{s+1,h_d}(x^d)}{q_{s+1,h_d}(x^d)}$ is a convergent to
$g_d(x)$. Note that
$$
\|q_{s+1,h_d}(x^d)\| = d\|q_{t,u_d}(x)\|+dc = \|q_{m+1,g_d}(x)\|,
$$
because of~\eqref{th1_case1_eq1} and~\eqref{th1_deg_qmp1}. Therefore
$p_{m+1,g_d} / q_{m+1,g_d}$ is generated by Lemma~\ref{lem1} from
$p_{s+1,h_d}(x)/q_{s+1,h_d}(x)$ and $(x-1)\nmid q_{s+1,h_d}(x)$.

{\bf Case 2.} $c = 1$. We firstly show that there exists a
convergent $p_{s,h_d}(x)/q_{s,h_d}(x)$ to $h_d(x)$ with
\begin{equation} \label{th1_case2_eq1}
\|q_{s,h_d}(x)\|= \|q_{t,u_d}(x)\|+1.
\end{equation}
If not then consider the convergent $\tilde{p}(x) /
\tilde{q}(x)$ to $h_d(x)$ where the degree of $\tilde{q}(x)$ is the
biggest possible not exceeding $\|q_{t,u_d}(x)\|$. Define $c_1\geq 0$ by $\|\tilde{q}(x)\| =
\|q_{t,u_d}(x)\| - c_1$. %Then the rate of approximation of $h_d(x)$ by
%$\tilde{p}(x)/\tilde{q}(x)$ is $c_2\geq 2$ and it is certainly
Under assumption that there is no $s\in\N$ such that~\eqref{th1_case2_eq1} holds true, we have that the rate of approximation of $h_d(x)$ by $\tilde{p}(x)/\tilde{q}(x)$ is at least $c_1+2$. %Let us denote this rate of approximation by $c_2$, $c_2\geq 2$
and it is certainly bigger than~1. Then  by Lemma~\ref{lem33},
$(x-1)\tilde{p}(x)/\tilde{q}(x)$ is a convergent to $u_d(x)$ and
$$
\left\|u_d(x)-\frac{(x-1)\tilde{p}(x)}{\tilde{q}(x)}\right\|\leq -2\|\tilde{q}(x)\| - c_1-1.
$$
The rational function $(x-1)\tilde{p}(x)/\tilde{q}(x)$ does not
coincide with $p_{t,u_d}(x)/q_{t,u_d}(x)$ because otherwise we must
have $||\tilde{q}(x)|| = ||q_{t,u_d}(x)||$ and $(x-1) \mid
p_{t,u_d}(x)$. The last condition contradicts~\eqref{th1_case_lem2}.
Therefore $\|\tilde{q}(x)\|<\|q_{t,u_d}(x)\|$ and the next
convergent of $u_d(x)$ after
$\frac{(x-1)\tilde{p}(x)}{\tilde{q}(x)}$ has the degree of the
denominator at least $\|\tilde{q}(x)\|+ c_1+1>\|q_{t,u_d}(x)\|$, so
necessarily $q_{t,u_d}$ can not be a denominator of a convergent to
$u_d$ which is absurd. The last contradiction shows that there
exists $s\in\N$ verifying~\eqref{th1_case2_eq1}.

Further, we claim that $(x-1)\nmid q_{s,h_d}(x)$. Indeed, assume
this is not the case. Then  by Lemma~\ref{lem33} we have that
$\frac{p_{s,h_d}(x)}{q_{s,h_d}(x)(x-1)^{-1}}$ is a convergent to
$u_d(x)$, moreover its rate of convergence to $u_d$ is at least 2
(because $\frac{(x-1)p_{s,h_d}(x)}{q_{s,h_d}(x)}$ has the rate of
convergence at least zero). Then use~\eqref{th1_case2_eq1} to
compare the degrees of numerators and denominators to find
$$
\frac{p_{t,u_d}(x)}{q_{t,u_d}(x)} = \frac{p_{s,h_d}(x)}{q_{s,h_d}(x)(x-1)^{-1}}.
$$
This is a contradiction, because we consider the case when the rate
of convergence of $\frac{p_{t,u_d}(x)}{q_{t,u_d}(x)}$ to $u_d(x)$ is
$c=1$.
%, while we have explained above that the rate of convergence
%of $\frac{p_{s,h_d}(x)}{q_{s,h_d}(x)(x-1)^{-1}}$ to $u_d(x)$ is at
%least 3.

Finally, by Lemma~\ref{lem1}, $\frac{(x-1)p_{s,h_d}(x^d)}{q_{s,h_d}(x^d)}$
is a convergent to $g_d(x)$ verifying, in view of~\eqref{th1_case2_eq1} and~\eqref{th1_deg_qmp1},
$$
\|q_{s,h_d}(x^d)\| \stackrel{\eqref{th1_case2_eq1}}= d\|q_{t,u_d}(x)\|+d
\stackrel{\eqref{th1_deg_qmp1}}= \|q_{m+1,g_d}(x)\|.
$$

Therefore in both Case~1 and Case~2 we have that $p_{m+1,g_d}(x) /
q_{m+1,g_d}(x)$ is generated by Lemma~\ref{lem1} from a convergent
$p_{s,h_d}(x)/q_{s,h_d}(x)$, for some $s\in\N$, to $h_d(x)$ such that $(x-1)\nmid
q_{s,h_d}(x)$.

Then by analogous arguments one shows that $p_{m+2,g_d}(x) / q_{m+2,g_d}(x)$
is generated by Lemma~\ref{lem2} from a convergent
$p_{t+1,u_d}(x)/q_{t+1,u_d}(x)$ of $u_d(x)$. We leave verification of the details to the interested reader as an exercise.
This completes the inductional step and hence completes the proof.
\endproof

Theorem~1 shows that all convergents of $g_d(x)$ are of a very
special form. That form allows us to compute the precise formula for
convergents of $g_d(x)$ in case $f_d(x)$ is badly approximable.

\section{Badly approximable Laurent series}\label{sec_bad}

As in the classical case of real numbers, we say that $f(x)\in
\QQ((x^{-1}))$ is badly approximable if the degree of every its
partial quotient is bounded from above by an absolute constant.
Otherwise we say that $f(x)$ is well approximable. In other terms, $f(x)$
is well approximable if its continued fraction expansion contains
partial quotients of arbitrary large degree.

We recall one standard result about well (badly) approximable series,
which counterpart in $\RR$ is classical.

\begin{proposition} \label{proposition_equivalence}
Let $f(x)\in \QQ((x^{-1}))$, $a(x), b(x)\in \QQ[x]\backslash\{0\}$. Then $f(x)$ is well (respectively badly) approximable if and only if
$g(x) := \frac{a(x)}{b(x)}f(x)$ is well (respectively badly) approximable.
\end{proposition}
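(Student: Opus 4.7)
The plan is to prove the equivalence only for well approximability, since $f$ is badly approximable iff it is not well approximable, and symmetrically the relation $g=(a/b)f$ may be inverted as $f=(b/a)g$ with $b/a\in\QQ(x)$. Hence it suffices to prove a single implication: if $f(x)$ is well approximable, then so is $g(x)=(a(x)/b(x))f(x)$.

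The key computation is the following. Let $p_n(x)/q_n(x)$ be the sequence of convergents of $f(x)$, and set $c_n := \|a_{n+1,f}(x)\| = -2\|q_n(x)\|-\|f-p_n/q_n\|$, so that the assumption of well approximability becomes $\limsup c_n = +\infty$. Starting from
$$
g(x)-\frac{a(x)p_n(x)}{b(x)q_n(x)} = \frac{a(x)}{b(x)}\left(f(x)-\frac{p_n(x)}{q_n(x)}\right),
$$
a direct computation of degrees yields
$$
\left\|g(x)-\frac{a(x)p_n(x)}{b(x)q_n(x)}\right\| = \|a(x)\|-\|b(x)\|-2\|q_n(x)\|-c_n.
$$
Let $d_n\ge 0$ be the degree of $\gcd(a(x)p_n(x),b(x)q_n(x))$, and let $P_n(x)/Q_n(x)$ be the reduced form of $a(x)p_n(x)/(b(x)q_n(x))$; then $\|Q_n(x)\|=\|b(x)\|+\|q_n(x)\|-d_n$, and the rate of approximation $c'_n$ of $P_n/Q_n$ to $g$ satisfies
$$
c'_n \;=\; c_n + 2d_n - \|a(x)\| - \|b(x)\| \;\ge\; c_n - \|a(x)\|-\|b(x)\|.
$$

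The conclusion then follows from the characterization of convergents quoted in Section~\ref{sec_cf}: a reduced rational fraction is a convergent of a Laurent series iff its rate of approximation is strictly positive, and in that case this rate equals the degree of the next partial quotient. For all $n$ large enough that $c_n > \|a(x)\|+\|b(x)\|$, we therefore have $c'_n>0$, so $P_n(x)/Q_n(x)$ is indeed a convergent of $g(x)$, and the degree of the next partial quotient of $g(x)$ at this convergent is exactly $c'_n$. Since $c_n$ is unbounded along a subsequence, so is $c'_n$, and $g(x)$ is well approximable.

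The only point requiring a small amount of care is that the convergents $P_n/Q_n$ so produced are really infinitely many distinct convergents of $g$ (and not e.g.\ collapsing to finitely many after reduction). This is automatic, however: $\|q_n\|$ is strictly increasing in $n$, and the bound $\|Q_n\|\ge \|q_n\|-\|a(x)\|$ (obtained from $d_n \le \|a(x)\|+\|b(x)\|+\min(0,\ldots)$, or simply from $d_n\le \|a(x)p_n\|$ combined with elementary degree bookkeeping) forces $\|Q_n\|\to\infty$, so infinitely many different convergents of $g(x)$ appear, each with next partial quotient of degree at least $c_n - \|a(x)\|-\|b(x)\|$. This is the mildest obstacle: it is essentially bookkeeping of degrees rather than any genuine difficulty.
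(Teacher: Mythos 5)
Your argument is correct and follows essentially the same route as the paper's own proof: multiply the good approximations $p_n/q_n$ of $f$ by $a(x)/b(x)$, observe that the rate of approximation drops by at most $\|a(x)\|+\|b(x)\|$ (so unbounded rates remain unbounded), and obtain the converse by symmetry from $f=(b/a)g$. Your extra bookkeeping about reduction and the distinctness of the resulting convergents is a welcome tightening of a point the paper leaves implicit, but it is not a different method.
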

\proof If $f(x)$ is well approximable then $\forall c>0$ there
exists $p(x)/q(x)$ such that
$$
\left\|f(x) - \frac{p(x)}{q(x)}\right\| < -2\|q(x)\| - c.
$$
Therefore
$$
\left\|g(x) - \frac{a(x)p(x)}{b(x)q(x)}\right\| <
-2\|b(x)q(x)\| - c+ \|a(x)\|+\|b(x)\|.
$$
Since $\|a(x)\|$ and $\|b(x)\|$ are fixed and $c$ can be made
arbitrarily large, $g(x)$ is also well approximable. The inverse
statement can be proved analogously by noting that $f(x) =
\frac{b(x)}{a(x)}g(x)$.
\endproof
The next lemma shows that the continued fraction of $g_d(x)$ verifies the following very special dichotomy: either the degrees of its partial quotients are unbounded, or, if not, all these degrees are upper bounded by $d-1$.
\begin{lemma}\label{lem3}
If $g_d(x)$ has at least one partial quotient of degree at least $d$
then $g_d(x)$ is well approximable.
\end{lemma}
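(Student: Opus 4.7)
The plan is to exploit the Mahler-type functional equation~\eqref{eq3_lem3} to convert any convergent of $g_d(x)$ with rate of approximation at least $d$ into another convergent of strictly larger rate, and then to iterate. Since the rate of approximation of the $n$-th convergent of $g_d(x)$ equals $\deg a_{n+1,g_d}$, iterating this boosting immediately produces partial quotients of unbounded degree, which is the defining property of a well approximable Laurent series.

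Concretely, assume $g_d(x)$ has a partial quotient of degree $k\ge d$ and let $p(x)/q(x)$ be the preceding convergent, so that $\|g_d(x)-p(x)/q(x)\|=-2\|q(x)\|-k$. The first step is to substitute $x^d$ for $x$ (which multiplies every Laurent-series degree by $d$), obtaining $\|g_d(x^d)-p(x^d)/q(x^d)\|=-2\|q(x^d)\|-dk$. Rewriting~\eqref{eq3_lem3} as $g_d(x)=x^{d^2-2d}(x-1)g_d(x^d)$ and multiplying by the prefactor $x^{d^2-2d}(x-1)$ of degree $(d-1)^2$ then yields
\[
\left\|g_d(x)-\frac{x^{d^2-2d}(x-1)\,p(x^d)}{q(x^d)}\right\|=-2\|q(x^d)\|-dk+(d-1)^2.
\]
Setting $P(x)/Q(x):=x^{d^2-2d}(x-1)\,p(x^d)/q(x^d)$, its unreduced rate of approximation to $g_d(x)$ equals $dk-(d-1)^2$; any cancellation in $P/Q$ only decreases $\|Q\|$ while preserving $\|g_d(x)-P(x)/Q(x)\|$, so the reduced $P/Q$ is a convergent of $g_d(x)$ whose subsequent partial quotient has degree at least $dk-(d-1)^2$.

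The key numerical inequality $dk-(d-1)^2-k=(d-1)(k-d+1)\ge d-1\ge 1$, valid for $k\ge d$ and $d\ge 2$, shows the new partial-quotient degree is strictly larger than $k$ and still at least $d$, so the construction can be iterated. The resulting sequence of degrees $k_0<k_1<k_2<\cdots$ satisfies $k_{j+1}\ge dk_j-(d-1)^2$; since this recursion has fixed point $d-1$ and we start from $k_0=k\ge d>d-1$, the $k_j$ tend to infinity exponentially. Hence $g_d(x)$ has partial quotients of arbitrarily large degree and is well approximable. The only place where I anticipate needing careful bookkeeping is the displayed identity above, specifically tracking that $x\mapsto x^d$ scales $\|\cdot\|$ by $d$ while the prefactor $x^{d^2-2d}(x-1)$ contributes exactly the additive correction $(d-1)^2=d^2-2d+1$.
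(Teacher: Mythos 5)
Your proposal is correct and follows essentially the same route as the paper: substitute $x^d$, invoke the functional equation $g_d(x^d)=g_d(x)/(x^{d^2-2d}(x-1))$, multiply back by the prefactor of degree $(d-1)^2$ to get a new convergent of rate $dk-(d-1)^2$, and iterate using $(d-1)(k-d+1)\ge 1$. Your extra remarks about cancellation only improving the rate and about the fixed point $d-1$ of the recursion are sound refinements of what the paper leaves implicit.
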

\proof Assume that there exists a partial quotient of $g_d(x)$ of
degree $c\ge d$. Then there exists a convergent $p(x)/q(x)$ to
$g_d(x)$ with the rate of approximation equals $c$:
\begin{equation}\label{eq_lem3}
\left|\left| g_d(x) - \frac{p(x)}{q(x)}\right|\right| = -2||q(x)|| -
c.
\end{equation}
The idea is to find another convergent $p^+(x) / q^+(x)$ to $g_d(x)$
which has the rate of approximation $c^+ > c$. If we are able to do
this, then we apply this construction recursively to find that there
exist convergents of $g_d(x)$ with arbitrarily large rate of
approximation which in turn implies that $g_d(x)$ is well
approximable.

Substitute $x^d$ in place of $x$ to~\eqref{eq_lem3} to get
\begin{equation}\label{eq2_lem3}
\left|\left| g_d(x^d) - \frac{p(x^d)}{q(x^d)}\right|\right| =
-2||q(x^d)|| - dc.
\end{equation}
\hidden{
Then we apply the equation $g_d(x) = x^{-n+1}f_d(x)$ and the
functional equation~\eqref{eq_funcf} for $f_d(x)$ to get that
\begin{equation}\label{eq3_lem3}
g_d(x^d) = \frac{g_d(x)}{x^{d^2-2d}(x-1)}.
\end{equation}
}
Further, substitute %this equation
the right hand side of~\eqref{eq3_lem3} in place of $g_d(x^d)$
into~\eqref{eq2_lem3} and multiply both sides by $x^{d^2-2d}(x-1)$:
$$
\left|\left| g_d(x) -
\frac{x^{d^2-2d}(x-1)p(x^d)}{q(x^d)}\right|\right| = -2||q(x^d)|| -
dc +1+d^2-2d = -2||q^+(x)|| - c^+,
$$
where $q^+(x) = q(x^d)$ and $c^+ = dc+2d-1-d^2$. One can easily
check that for $c\ge d$ we have $c^+ > c$. This finishes the proof of the
lemma.
\endproof

\begin{lemma}\label{lem4}
If $f_d(x)$ is badly approximable then all the partial quotients of
$h_d(x)$ and $u_d(x)$ are linear.
\end{lemma}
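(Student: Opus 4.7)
The plan is that this lemma follows essentially as a corollary of Lemma~\ref{lem3} combined with Lemmata~\ref{lem1} and~\ref{lem2}. First I would invoke Proposition~\ref{proposition_equivalence}: since $h_d(x)$, $u_d(x)$, and $g_d(x)$ differ from $f_d(x)$ by multiplication by fixed rational functions, the badly approximability of $f_d(x)$ transfers to all three. In particular $g_d(x)$ is badly approximable, and so by the contrapositive of Lemma~\ref{lem3} every partial quotient of $g_d(x)$ has degree at most $d-1$.

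Next I would argue by contradiction. Suppose some partial quotient $a_{i,h_d}(x)$ had degree $c\geq 2$. Then the $(i-1)$-th convergent of $h_d(x)$ approximates $h_d(x)$ with rate of approximation exactly $c$, and Lemma~\ref{lem1} produces a convergent of $g_d(x)$ whose rate of approximation is at least $cd-1\geq 2d-1$. Since the rate of approximation of a convergent equals the degree of the immediately following partial quotient, this would force $g_d(x)$ to have a partial quotient of degree at least $2d-1$, which for $d\geq 2$ strictly exceeds $d-1$, contradicting the bound of the previous paragraph. Hence every partial quotient of $h_d(x)$ has degree at most $1$, and since partial quotients $a_i$ with $i\geq 1$ are by convention non-constant, they are exactly linear.

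The argument for $u_d(x)$ is entirely parallel, with Lemma~\ref{lem2} in the role of Lemma~\ref{lem1}: a hypothetical degree $c\geq 2$ partial quotient of $u_d(x)$ would, via Lemma~\ref{lem2}, yield a convergent of $g_d(x)$ with rate of approximation at least $d(c-1)+1\geq d+1>d-1$, again contradicting Lemma~\ref{lem3}. There is no real obstacle to this proof; the only point to notice is that Lemmata~\ref{lem1} and~\ref{lem2} give \emph{lower} bounds on the rate of approximation (equalities require extra coprimality), which is all that is needed to obtain the contradiction. The strict inequalities $2d-1>d-1$ and $d+1>d-1$ both hold for $d\geq 2$, so the conclusion is uniform over the range of $d$ considered.
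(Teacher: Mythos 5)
Your proof is correct and follows essentially the same route as the paper: both arguments convert a hypothetical degree-$\ge 2$ partial quotient of $h_d$ (resp.\ $u_d$) into a convergent of rate at least $2$, push it through Lemma~\ref{lem1} (resp.\ Lemma~\ref{lem2}) to get a convergent of $g_d$ with rate at least $2d-1$ (resp.\ $d+1$), and then invoke Lemma~\ref{lem3} together with Proposition~\ref{proposition_equivalence}. The only difference is presentational — you argue by contradiction from the bad approximability of $g_d$, while the paper states the contrapositive directly — and your remark that only the lower bounds on the rate from Lemmata~\ref{lem1} and~\ref{lem2} are needed is exactly the right observation.
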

\proof Assume that $h_d(x)$ has a partial quotient of degree at least 2.
In this case there exists a convergent $p(x)/q(x)$ to $h_d(x)$ with the rate
of approximation at least 2. Then Lemma~\ref{lem1} gives
$$
\left|\left| g_d(x) - \frac{(x-1)p(x^d)}{q(x^d)}\right|\right| \le
-2||q(x^d)|| - 2d+1.
$$
Since $2d-1\ge d$, we have by Lemma~\ref{lem3} that $g_d(x)$ is well
approximable. Then Proposition~\ref{proposition_equivalence} implies
that $f_d(x)$ is well approximable as well.

Similar considerations work in the case of $u_d(x)$. If there exists a
convergent $p(x)/q(x)$ of $u_d(x)$ with the rate of approximation at
least 2 then we use Lemma~\ref{lem2} to get
$$
\left|\left| g_d(x) - \frac{p^*(x)}{q^*(x)}\right|\right| \le
-2||q^*(x)|| - d-1.
$$
Again, we have $d+1>d$ and therefore $g_d(x)$ together with $f_d(x)$
are well approximable.
\endproof

If we know that $f_d(x)$ is badly approximable then with the help of
Lemma~\ref{lem4} we can find the recurrent formula for the
convergents of $g_d(x)$. The following theorem generalizes Proposition~3.2 from~\cite{badziahin_zorin_2014}.

\begin{theorem}\label{th2}
If $f_d(x)$ is badly approximable then the monic denominators
$q_{n,g_d}(x)$ of the convergents of $g_d(x)$ satisfy the following
recurrent equations
\begin{eqnarray}
&&q_{1,g_d}(x) = x^{d-1}+\cdots + x +1;\quad q_{2,g_d}(x) = x^d+1;\nonumber\\
&&q_{2k+1,g_d}(x) = (x^{d-1}+\cdots + x+1)q_{2k,g_d}(x) +
\beta_{2k+1}q_{2k-1,g_d}(x);\quad k\in\NN\label{eq1_th2}\\
&&q_{2k+2,g_d}(x) = (x-1)q_{2k+1,g_d}(x) +
\beta_{2k+2}q_{2k,g_d}(x),\label{eq2_th2}
\end{eqnarray}
where $\beta_k$ are some rational numbers.
\end{theorem}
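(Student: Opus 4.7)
The plan is to combine Lemma~\ref{lem4} with Theorem~\ref{th1}, and then to extract the specific form of the monic partial quotients of $g_d(x)$ by a short root-of-unity argument. By Lemma~\ref{lem4}, in the badly approximable case every partial quotient of $h_d(x)$ and $u_d(x)$ is linear, so $\|q_{k,h_d}\|=\|q_{k,u_d}\|=k$ for every $k\geq 0$. Theorem~\ref{th1}, together with a degree count, identifies the monic convergents of $g_d(x)$ as
\[
q_{2k,g_d}(x) = q_{k,h_d}(x^d), \qquad q_{2k+1,g_d}(x) = (1+x+\cdots+x^{d-1})\,q_{k,u_d}(x^d),
\]
and gives $\deg a_{2k+1,g_d}=d-1$ and $\deg a_{2k+2,g_d}=1$. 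The base cases $q_{1,g_d}=1+x+\cdots+x^{d-1}$ and $q_{2,g_d}=x^d+1$ can be checked by direct computation using $p_{0,u_d}/q_{0,u_d}=1/1$ and $p_{1,h_d}/q_{1,h_d}=1/(x+1)$.

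For the odd-indexed recurrence, substituting the above expressions into~\eqref{eq_converg_mon} and rearranging gives
\[
\hat{a}_{2k+1,g_d}(x)\,q_{k,h_d}(x^d) = (1+x+\cdots+x^{d-1})\bigl[q_{k,u_d}(x^d) - \beta_{2k+1}\,q_{k-1,u_d}(x^d)\bigr].
\]
By the last assertion of Theorem~\ref{th1}, $(x-1)\nmid q_{k,h_d}(x)$, so $q_{k,h_d}(\zeta^d) = q_{k,h_d}(1)\neq 0$ for every $d$-th root of unity $\zeta\neq 1$. Consequently $(1+x+\cdots+x^{d-1})\mid\hat{a}_{2k+1,g_d}(x)$, and since both polynomials are monic of degree $d-1$, they must coincide.

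For the even-indexed recurrence, the analogous substitution yields
\[
q_{k+1,h_d}(x^d) - \beta_{2k+2}\,q_{k,h_d}(x^d) = \hat{a}_{2k+2,g_d}(x)\,(1+x+\cdots+x^{d-1})\,q_{k,u_d}(x^d).
\]
The left-hand side lies in $\QQ[x^d]$, and since $q_{k,u_d}(x^d)$ is a nonzero element of $\QQ[x^d]$, cancelling it from the $\QQ[x^d]$-invariance condition $A(\zeta x)=A(x)$ ($\zeta^d=1$) forces $(x+\alpha)(1+x+\cdots+x^{d-1})\in\QQ[x^d]$, where $\hat{a}_{2k+2,g_d}(x)=x+\alpha$ is monic of degree $1$. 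Expanding the product shows its middle coefficients all equal $1+\alpha$, so $\alpha=-1$ and $\hat{a}_{2k+2,g_d}(x)=x-1$. The hard part of the plan is the even case: a degree count alone only pins down that $\hat{a}_{2k+2,g_d}$ is monic and linear, and the root-of-unity argument is what fixes the constant term.
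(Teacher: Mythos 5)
Your proof is correct and follows essentially the same route as the paper: Lemma~\ref{lem4} plus Theorem~\ref{th1} to identify $q_{2k,g_d}(x)=q_{k,h_d}(x^d)$ and $q_{2k+1,g_d}(x)=(1+x+\cdots+x^{d-1})q_{k,u_d}(x^d)$, then degree counts and a divisibility argument to pin down the partial quotients. The only cosmetic difference is in the odd case, where the paper derives $\gcd(1+x+\cdots+x^{d-1},q_{2k,g_d})=1$ from the coprimality of consecutive convergent denominators rather than by evaluating at roots of unity; your even case (forcing $\hat{a}_{2k+2}(x)(1+x+\cdots+x^{d-1})\in\QQ[x^d]$ and hence $\hat{a}_{2k+2}=x-1$) is the paper's argument, with the cancellation step spelled out more carefully than in the original.
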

In other words Theorem~\ref{th2} almost completely describes the continued fraction expansion
of badly approximable functions $g_d(x)$, up to determination of rational parameters $\beta_k$.

\begin{proof}[Proof of Theorem~\ref{th2}] We assume that $f_d(x)$ is badly approximable, so by Lemma~\ref{lem4} we have that
the partial quotients $a_{n,h_d}(x)$ of $h_d(x)$ are linear which in
turn implies that the $n$th convergent $\frac{p_{n,h_d}(x)}{q_{n,h_d}(x)}$
to $h_d(x)$ has denominator of degree $n$ with the rate of
approximation 1. Hence by Lemma~\ref{lem1},
\begin{equation} \label{q_one}
\frac{(x-1)p_{n,h_d}(x^d)}{q_{n,h_d}(x^d)}
\end{equation}
is a convergent to $g_d(x)$ with the rate of convergence at least
$d-1$. The polynomial $x-1$ does not divide $q_{n,h_d}(x)$ because otherwise
$\frac{p_{n,h_d}(x^d)}{q_{n,h_d}(x^d)(x-1)^{-1}}$ is a convergent to
$g_d(x)$ with the rate of approximation at least $d$ and therefore
by Lemma~\ref{lem3}, $g_d(x)$ is well approximable, which is not
true.

Also, Lemma~\ref{lem4} implies that the $n$-th convergent
$\frac{p_{n,u_d}(x)}{q_{n,u_d}(x)}$ has denominator of degree $n$
and the rate of convergence 1. Then we infer with Lemma~\ref{lem2}
that
\begin{equation}  \label{q_two}
\frac{p^*_{n,u_d}(x)}{q^*_{n,u_d}(x)} =
\frac{p_{n,u_d}(x^d)}{(x^{d-1}+\cdots+x+1)q_{n,u_d}(x^d)}
\end{equation}
is a convergent of $g_d(x)$ with the rate of convergence 1. As
before, $x-1$ does not divide $p_{n,u_d}(x)$ because otherwise
$$
\frac{(x^{d-1}+\ldots+1)^{-1}p_{n,u_d}(x^d)}{q_{n,u_d}(x^d)}
$$
is the convergent of $g_d(x)$ with the rate of approximation at
least $n$ which is impossible. Therefore $p_{n,u_d}(x^d)$ and
$(1+x+\ldots+x^{d-1})q_{n,u_d}(x^d)$ are coprime.

So for each $k\in \ZZ\ge 0$ there exists a convergent of $g_d(x)$ of the form~\eqref{q_one},
with the denominator of degree $kd$, and another one of the form~\eqref{q_two}, with the
denominator of degree $kd+d-1$. By Theorem~\ref{th1} no other
convergents of $g_d(x)$ exist. This allows us to construct $q_{1,g_d}(x)$ and $q_{2,g_d}(x)$:
$$
q_{1,g_d}(x) = (x^{d-1}+\ldots+x+1) q_{0,u_d}(x) = x^{d-1}+\ldots+x+1;
$$$$
q_{2,g_d}(x) = g_{1,h_d}(x^d) = x^d + 1.
$$
The second line above readily follows from the fact that $\frac{1}{x+1}$ is the
first convergent of $h_d(x)$.

For the general denominators $q_{n,g_d}(x)$ we have the following
formula
$$
q_{2k+1,g_d}(x) = (x^{d-1}+\cdots+x+1)q_{k,u_d}(x^d)\quad\mbox{and}\quad
q_{2k,g_d}(x) = q_{k,h_d}(x^d).
$$
Using the formula~\eqref{eq_converg_mon} for the monic convergents
of $g_d(x)$ we have
\begin{equation} \label{eq_converg_beta}
q_{2k+1,g_d}(x) = a_{2k+1}(x)q_{2k,g_d}(x) + \beta_{2k+1}q_{2k-1,g_d}(x)
\end{equation}
where $a_{2k+1}\in\QQ[x]$ is monic and $\beta_{2k+1}\in\QQ$. By
comparing the degrees of both sides of this equation we find
$\|a_{2k+1}(x)\| = d-1$. We also have $x^{d-1}+\ldots + x + 1\mid
q_{2k+1,g_d}(x), q_{2k-1,g_d}(x)$ and
$$
\gcd(x^{d-1}+\cdots+x+1, q_{2k,g_d}(x)) \mid \gcd(q_{2k-1,g_d}(x),
q_{2k,g_d}(x)) = 1.
$$
Therefore $x^{d-1}+\cdots+x+1\mid a_{2k+1}(x)$. Since the degrees of
these two polynomials coincide and both of them are monic we
conclude $a_{2k+1}(x) = x^{d-1}+\cdots+x+1$.

Next,
\begin{equation} \label{eq_q2kp2_and_beta}
q_{2k+2,g_d}(x) = a_{2k+2}(x)q_{2k+1,g_d}(x) + \beta_{2k+2}q_{2k,g_d}(x)
\end{equation}
where $a_{2k+2}(x)\in \QQ[x]$ is monic and $\beta_{2k+2}\in \QQ$.
Degree comparing gives us that $a_{2k+2}(x)$ is linear. Also we have
$$
a_{2k+2}(x)\cdot (x^{d-1}+\cdots+x+1)q_{k,u_d}(x^d) = q_{k+1,h_d}(x^d) -
q_{k,h_d}(x^d)
$$
Therefore $a_{2k+2}(x)\cdot (x^{d-1}+\cdots+x+1)$ is a polynomial of
$x^d$. This is only possible if $a_{2k+2} = x-1$.
\end{proof}

{\noindent \bf Remark.} The formulae for $q_{1,g_d}(x)$ and
$q_{2,g_d}(x)$ do not require $f_d(x)$ to be badly approximable. So
this part of Theorem~\ref{th2} is satisfied for all values~$d$.

Unfortunately, Theorem~\ref{th2} does not cover too many cases of
functions $g_d(x)$. In fact $f_d(x)$ is badly approximable only for
$d=2$ and $d=3$. For $d=2$ it is shown
in~\cite{badziahin_zorin_2014} and for $d=3$ it is shown
in~\cite{poorten_mendes_allouche_1991}. It is not too difficult to
show that $f_d(x)$ is well approximable for $d\ge 4$, however for
the sake of completeness we provide the proof here.

\begin{proposition} \label{proposition_wa}
Let $d\in\N$, $d\geq 4$. The function $f_d(x)$ is well approximable.
\end{proposition}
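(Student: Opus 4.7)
The plan is to construct, for each $k\in\NN$, an explicit rational approximation $P_k(x)/x^{n_k}$ to $f_d(x)$ whose rate of approximation tends to infinity with $k$. Since any rational function approximating $f_d(x)$ with positive rate is automatically a convergent, and since the rate of a convergent coincides with the degree of the next partial quotient, producing such a family immediately yields partial quotients of $f_d(x)$ of unbounded degree, hence well approximability.

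First I would exploit the sparsity of the Laurent expansion of $f_d(x)$. Expanding the infinite product gives
$$
f_d(x) \;=\; \sum_{S\subset\NN,\,|S|<\infty} (-1)^{|S|}\, x^{-\sum_{t\in S}d^t} \;=\; \sum_{n\geq 0} c_n x^{-n},
$$
where $c_n = (-1)^s$ if the base-$d$ expansion of $n$ has exactly $s$ digits equal to $1$ and all other digits equal to $0$, and $c_n = 0$ otherwise. For $k\geq 0$ set $n_k := 1+d+\cdots+d^k = (d^{k+1}-1)/(d-1)$, which is the largest admissible index strictly below $d^{k+1}$; the next index $n>n_k$ with $c_n\neq 0$ is exactly $d^{k+1}$. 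Define $P_k(x) := x^{n_k}\sum_{n\leq n_k} c_n x^{-n}\in\ZZ[x]$. By construction,
$$
\left\|f_d(x) - \frac{P_k(x)}{x^{n_k}}\right\| \;=\; -d^{k+1} \;=\; -2 n_k - c_k,\qquad c_k := d^{k+1}-2n_k = \frac{(d-3)d^{k+1}+2}{d-1}.
$$

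To confirm that $P_k/x^{n_k}$ is in reduced form I would observe that its constant term equals $c_{n_k}=(-1)^{k+1}\neq 0$, so $x\nmid P_k(x)$ and hence $\gcd(P_k(x),x^{n_k})=1$. For $d\geq 4$ one has $d-3\geq 1$, so $c_k\geq (d^{k+1}+2)/(d-1)\to\infty$; in particular $c_k\geq 1$ for every $k$, which makes each $P_k/x^{n_k}$ a genuine convergent of $f_d(x)$. Its rate $c_k$ equals the degree of the corresponding ``next'' partial quotient of $f_d(x)$, so $f_d(x)$ admits partial quotients of arbitrarily large degree, i.e.\ it is well approximable.

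The only delicate step is the coprimality check: without it one would not know whether $P_k/x^{n_k}$ is an actual convergent, but this reduces to the non-vanishing of a single explicit coefficient and is immediate. It is also worth remarking that the formula for $c_k$ correctly predicts the dichotomy observed by other authors: $d=3$ yields $c_k=1$ for all $k$, and $d=2$ yields $c_k\leq 0$, consistent with $f_d(x)$ being badly approximable in both of those cases.
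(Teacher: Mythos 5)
Your proof is correct and takes essentially the same route as the paper: your truncated series $P_k(x)/x^{n_k}$ is exactly the finite product $r_k(x)=\prod_{t=0}^{k}(1-x^{-d^t})$ used there, and the computation of the approximation rate $d^{k+1}-2(d^{k+1}-1)/(d-1)\to\infty$ is identical. The coprimality check via the constant term $c_{n_k}=(-1)^{k+1}$ is a harmless refinement that the paper omits (and does not strictly need, since reducing a fraction can only increase its rate of approximation).
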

\proof The finite products $r_k(x) = \prod_{t=0}^k(1-x^{-d^t})$
provide approximations to $f_d(x)$ good enough to conclude that it
is well approximable. Indeed,
$$
||f_d(x) - r_k(x)|| = \left|\left|\prod_{t=0}^{k} (1-x^{-d^t})\cdot
\left(\prod_{t=1}^{\infty}(1-x^{-d^{t+k}}) - 1\right)\right|\right|
= -d^{k+1}.
$$
$r_k(x)$ is a rational function with denominator
$$
x^{\sum_{t=0}^k d^t} = x^{\frac{d^{k+1}-1}{d-1}}.
$$
Finally we have that for $d\geq 4$,  $d^{k+1} -
2\frac{d^{k+1}-1}{d-1} \to \infty$ as $k$ tends to infinity.
Therefore the rational functions $r_k(x)$ provide approximations to
$f_d(x)$ with an arbitrarily large rate. This completes the proof of
the proposition.
\endproof

\section{Computing the values of $\beta_k$}\label{sec_d3}

To find the precise formula for the continued fraction of $g_d(x)$
we still need to compute the values of the parameters $\beta_k$
in~\eqref{eq1_th2} and~\eqref{eq2_th2}. For $d=2$ this has already
been done in~\cite{badziahin_zorin_2014}.

\begin{theorembzz}
In the case $d=2$ the values $\beta_k$ in~\eqref{eq1_th2}
and~\eqref{eq2_th2} can be computed by the following recurrent
formulae
\begin{eqnarray*}
&&\beta_3 = -1,\quad \beta_4=1,\\
&&\beta_{2k+1} = -\frac{\beta_{k+1}}{\beta_{2k}},\\
&&\beta_{2k+2} = 1+(-1)^k - \beta_{2k+1}\quad\mbox{for }k\ge 2.
\end{eqnarray*}
\end{theorembzz}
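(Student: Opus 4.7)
The plan is to derive two independent expressions for the coefficients of the monic continued fraction of $h_2(x)$ and to equate them.

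Write the monic $h_2$-recurrence as $q_{m+1,h_2}(x) = (x+\lambda_{m+1})\, q_{m,h_2}(x) + \mu_{m+1}\, q_{m-1,h_2}(x)$. As the first step I would substitute the two identities
$q_{2k,g_2}(x) = q_{k,h_2}(x^2)$ and $q_{2k+1,g_2}(x) = (x+1)\, q_{k,u_2}(x^2)$,
established in the proof of Theorem~\ref{th2}, into the $g_2$-recurrences \eqref{eq1_th2}--\eqref{eq2_th2}. Cancelling the common factor $(x+1)$ where it appears, and setting $y=x^2$, yields
\begin{align*}
q_{k,u_2}(y) &= q_{k,h_2}(y) + \beta_{2k+1}\, q_{k-1,u_2}(y), \\
q_{k+1,h_2}(y) &= (y-1)\, q_{k,u_2}(y) + \beta_{2k+2}\, q_{k,h_2}(y).
\end{align*}
Eliminating $q_{k,u_2}$ between these and reading the result against the monic $h_2$-recurrence gives $\lambda_{k+1} = \beta_{2k+1}+\beta_{2k+2}-1$ and $\mu_{k+1} = -\beta_{2k+1}\beta_{2k}$.

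The second step exploits the functional self-similarity of $h_2$. The identities $h_2(x)=(x-1)\,h_2(x^2)$ and $u_2(x^2)=(x+1)\,h_2(x)$, together with the uniqueness of rate-one convergents in the badly approximable regime (Lemma~\ref{lem4}) and with the non-vanishing conditions $q_{n,h_2}(1)\neq 0$ and $p_{n,u_2}(1)\neq 0$ already established inside the proof of Theorem~\ref{th2}, force
$$q_{2n,h_2}(x) = q_{n,h_2}(x^2), \qquad q_{2n+1,h_2}(x) = (x+1)\, q_{n,u_2}(x^2).$$
Plugging these into the monic $h_2$-recurrence at $m=2n+1$ and $m=2n+2$ and splitting both sides into even- and odd-degree parts in $x$ (since $q_{n,h_2}(x^2)$ and $q_{n,u_2}(x^2)$ involve only even powers) produces the second description: $\lambda_m = (-1)^{m+1}$ and $\mu_m = \beta_m$ for $m\ge 2$.

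Matching the two expressions for $\lambda_{k+1}$ and $\mu_{k+1}$ for $k\ge 1$ gives $\beta_{k+1} = -\beta_{2k+1}\beta_{2k}$ and $\beta_{2k+1}+\beta_{2k+2}-1 = (-1)^k$, which rearrange precisely to the announced recurrences. The base cases follow by direct computation: from $q_{2,g_2}(x)=(x-1)\,q_{1,g_2}(x)+2\,q_{0,g_2}(x)$ one reads $\beta_2=2$, and the two recurrences applied at $k=1$ then produce $\beta_3=-1$ and $\beta_4=1$.

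The principal difficulty I anticipate is the rigorous justification of the self-similar identities used in the second step: one must check that the candidate rational functions $(x-1)p_{n,h_2}(x^2)/q_{n,h_2}(x^2)$ and $p_{n,u_2}(x^2)/[(x+1)q_{n,u_2}(x^2)]$ are already in reduced form and achieve rate of approximation exactly one to $h_2(x)$, so that the denominator-degree uniqueness of convergents in the badly approximable regime forces them to be the $2n$-th and $(2n+1)$-th convergents of $h_2$ respectively. Once this self-similar structure is secured, everything else is routine algebra.
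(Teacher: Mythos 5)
Your derivation is correct and, as far as this paper is concerned, it is not competing with an in-text proof: Theorem BZ1 is quoted here from \cite{badziahin_zorin_2014} without proof, and the present paper only develops partial analogues for $d=3$ in Section~\ref{sec_d3}. Your two computations of $(\lambda,\mu)$ check out ($\lambda_{k+1}=\beta_{2k+1}+\beta_{2k+2}-1$, $\mu_{k+1}=-\beta_{2k+1}\beta_{2k}$ from the elimination, versus $\lambda_m=(-1)^{m+1}$, $\mu_m=\beta_m$ from self-similarity), and matching them does reproduce the stated recurrences together with $\beta_3=-1$, $\beta_4=1$ from $\beta_2=2$. One remark that collapses your anticipated ``principal difficulty'': for $d=2$ one has $g_2(x)=x^{-d+1}f_2(x)=x^{-1}f_2(x)=h_2(x)$ identically, so the self-similar identities $q_{2n,h_2}(x)=q_{n,h_2}(x^2)$ and $q_{2n+1,h_2}(x)=(x+1)q_{n,u_2}(x^2)$ are literally the identities $q_{2k,g_2}(x)=q_{k,h_d}(x^d)$ and $q_{2k+1,g_2}(x)=(x^{d-1}+\cdots+1)q_{k,u_d}(x^d)$ already proved inside Theorem~\ref{th2}, and the second set of values $\lambda_m=(-1)^{m+1}$, $\mu_m=\beta_m$ is just a restatement of \eqref{eq1_th2}--\eqref{eq2_th2}; no separate uniqueness argument is needed. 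It is worth contrasting your method with what the paper does for $d=3$: there the authors extract relations among the $\beta_k$ by reducing the renormalized recurrence modulo $x-1$ (Proposition~\ref{prop2}) and by comparing leading coefficients (Propositions~3 and~\ref{proposition_a_k_b_k_beta_k}), which yields only a product relation and sum relations and does not close the recursion. Your elimination of $q_{k,u_2}$ between the two half-step recurrences, combined with the coincidence $g_2=h_2$, is exactly what makes the $d=2$ case fully solvable, and it explains structurally why the same scheme does not carry over verbatim to $d=3$, where $g_3\neq h_3$ and the renormalization map sends index $2k$ to $6k$ rather than to $2k$.
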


In the case $d=3$ the formulae for $\beta_k$ are more complicated.
In this section we will get several equations between different
values of the sequence $\beta_k$ and will explain how to get other
equations which will finally enable us to provide the complete
recurrent formula for $\beta_k$.

From now on we will most often speak about the convergents of
$g_d(x)$ therefore for convenience instead of $p_{n,g_d}(x)$ and
$q_{n,g_d}(x)$ we will just write $p_n(x)$ and $q_n(x)$
respectively.

\begin{lemma}\label{lem5}
For all $k\in\N$ the convergents %$q_n(x)$ and $p_n(x)$ of
to $g_3(x)$ satisfy the following formula:
%satisfy the formulae
$$
q_{6k}(x) = q_{2k}(x^3);\quad p_{6k}(x) = x^3(x-1)p_{2k}(x^3).
$$
\end{lemma}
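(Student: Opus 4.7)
The plan is to manufacture the $6k$-th convergent of $g_3(x)$ directly from the $2k$-th convergent by the change of variable $x \to x^3$, exploiting the Mahler-type equation~\eqref{eq3_lem3} with $d = 3$, which reads $g_3(x^3) = g_3(x)/[x^3(x-1)]$. Since $f_3(x)$ is badly approximable, Theorem~\ref{th2} applies, and from its recurrences one reads off $\deg q_{2k}(x) = 3k$ and $\deg a_{2k+1,g_3}(x) = d-1 = 2$; the latter means
\[
\|g_3(x) - p_{2k}(x)/q_{2k}(x)\| = -6k - 2.
\]

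Substituting $x \to x^3$ scales the degree of any Laurent series at infinity by $3$, giving $\|g_3(x^3) - p_{2k}(x^3)/q_{2k}(x^3)\| = -18k - 6$. Multiplying through by $x^3(x-1)$ and applying~\eqref{eq3_lem3} produces
\[
\|g_3(x) - R(x)\| = -18k - 2, \qquad R(x) := \frac{x^3(x-1)\,p_{2k}(x^3)}{q_{2k}(x^3)}.
\]
As the rate of approximation is strictly positive, $R(x)$ in lowest terms is a convergent of $g_3(x)$. If I can show that the numerator and denominator of $R(x)$ are already coprime as written, then the denominator $q_{2k}(x^3)$ has degree $9k$, which by Theorem~\ref{th2} coincides with $\deg q_{6k}(x)$; since the denominator degrees of convergents of $g_3$ are all distinct, $R(x)$ must equal $p_{6k}(x)/q_{6k}(x)$, and the two claimed equalities follow (the monic denominators match because $q_{2k}$ is monic).

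The main obstacle is therefore this coprimality. I would handle it by contradiction using the bad approximability hypothesis rather than by directly verifying non-vanishing conditions like $q_{2k}(0) \neq 0$ or $q_{2k}(1) \neq 0$. If $e := \deg\gcd\bigl(x^3(x-1)p_{2k}(x^3),\, q_{2k}(x^3)\bigr) \ge 1$, then the reduced form of $R(x)$ would be a convergent of $g_3(x)$ whose denominator has degree $9k - e$ and whose rate of approximation equals $2 + 2e \ge 4 \ge d$. Lemma~\ref{lem3} would then force $g_3(x)$ to be well approximable, and by Proposition~\ref{proposition_equivalence} also $f_3(x)$, contradicting our standing hypothesis. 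Hence $e = 0$ and the proof is complete.
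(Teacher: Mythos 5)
Your proposal is correct and follows essentially the same route as the paper's own proof: the same use of the functional equation $g_3(x^3)=g_3(x)/[x^3(x-1)]$, the same degree bookkeeping via Theorem~\ref{th2}, and the same coprimality-by-contradiction argument through Lemma~\ref{lem3} and the bad approximability of $f_3(x)$. No gaps.
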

\proof Note that by Theorem~\ref{th2} the degrees of $q_k(x)$
exhaust all positive integers congruent to 0 or 2 modulo 3.
Therefore $\|q_{2k}(x)\| = 3k$ and $\|q_{2k+1}(x)\| = 3k+2$, $k\in\N$.

From Theorem~\ref{th2} we also have that
\begin{equation}\label{eq1_lem5}
\left\| g_3(x) - \frac{p_{2k}(x)}{q_{2k}(x)}\right\| =
-2||q_{2k}(x)|| - ||a_{2k+1}(x)|| = -2||q_{2k}(x)|| - 2.
\end{equation}
Recall that $g_d(x)$ satisfies the functional
equation~\eqref{eq3_lem3}. In particular, $g_3(x^3) =
\frac{g_3(x)}{x^3(x-1)}$. Then the equation~\eqref{eq1_lem5} with
$x^3$ substituted in place of $x$ gives us
$$
\left\| g_3(x^3) -
\frac{p_{2k}(x^3)}{q_{2k}(x^3)}\right\| = \left\|
\frac{g_3(x)}{x^3(x-1)} -
\frac{p_{2k}(x^3)}{q_{2k}(x^3)}\right\| = -6\|q_{2k}(x)\| - 6.
$$
Multiply both sides of this equation by $x^3(x-1)$ to %finally
get
$$
\left\|g_3(x) -
\frac{p_{2k}(x^3)x^3(x-1)}{q_{2k}(x^3)}\right\| =
-2||q_{2k}(x^3)|| - 2.
$$
This shows that $\frac{p_{2k}(x^3)x^3(x-1)}{q_{2k}(x^3)}$ is
a convergent to $g_d(x)$. Note that the fraction $\frac{p_{2k}(x^3)x^3(x-1)}{q_{2k}(x^3)}$ is irreducible, because in the opposite case the rate of its convergence to $g_3(x)$ would be at least 3, hence Lemma~\ref{lem3} would have implied that $g_3(x)$ is not badly approximable, which is not the case~\cite{poorten_mendes_allouche_1991}.
Finally, by calculating the degree of denominator of this
convergent we conclude the proof.
\endproof

Since $q_0(x) = 1$, formulae for $q_1(x)$ and $q_2(x)$ allow us to
conclude that $\beta_2=2$.

\begin{proposition}\label{prop2}
For each integer $k\ge 0$ we have
\begin{equation}\label{eq_prop2}
\beta_{6k+6}\beta_{6k+4}\beta_{6k+2} = \beta_{2k+2}.
\end{equation}
\end{proposition}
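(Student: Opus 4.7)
The plan is to exploit the self-similarity recorded in Lemma~\ref{lem5} by writing $q_{6k+6}(x)$ in two different ways and then comparing them modulo $x-1$. This comparison will isolate exactly the product $\beta_{6k+2}\beta_{6k+4}\beta_{6k+6}$.

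First, I would apply Lemma~\ref{lem5} together with the monic recurrence~\eqref{eq2_th2} at index $2k+2$, substituting $x^3$ for $x$ in the latter. Since $q_{2j}(x^3)=q_{6j}(x)$, this yields
\[
q_{6k+6}(x)=(x^3-1)q_{2k+1}(x^3)+\beta_{2k+2}\,q_{6k}(x).
\]
The same polynomial also satisfies the direct one-step recurrence
\[
q_{6k+6}(x)=(x-1)q_{6k+5}(x)+\beta_{6k+6}\,q_{6k+4}(x).
\]
Equating the two and using $x^3-1=(x-1)(x^2+x+1)$, the contributions of $q_{2k+1}(x^3)$ and $q_{6k+5}(x)$ merge into a common factor $(x-1)$, giving
\[
\beta_{6k+6}\,q_{6k+4}(x)-\beta_{2k+2}\,q_{6k}(x)=(x-1)\bigl[(x^2+x+1)q_{2k+1}(x^3)-q_{6k+5}(x)\bigr].
\]
Specializing at $x=1$ kills the right-hand side and produces the key identity $\beta_{6k+6}\,q_{6k+4}(1)=\beta_{2k+2}\,q_{6k}(1)$.

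The remaining task is to compute $q_{2j}(1)$. Plugging $x=1$ into~\eqref{eq2_th2} gives $q_{2j+2}(1)=\beta_{2j+2}\,q_{2j}(1)$, so by induction from $q_0=1$ one gets $q_{2j}(1)=\beta_2\beta_4\cdots\beta_{2j}$. These values are nonzero: Theorem~\ref{th1} identifies $q_{2j,g_3}(x)$ with a scalar multiple of $q_{t,h_3}(x^3)$ satisfying $(x-1)\nmid q_{t,h_3}$, whence $q_{2j}(1)\neq 0$ and all the $\beta_{2i}$ appearing in the product are nonzero. Substituting the formulas for $q_{6k+4}(1)$ and $q_{6k}(1)$ into the key identity and cancelling the common factor $\beta_2\beta_4\cdots\beta_{6k}$ delivers $\beta_{6k+2}\beta_{6k+4}\beta_{6k+6}=\beta_{2k+2}$, as required.

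The only step that requires any insight is realizing that the two decompositions of $q_{6k+6}$ cannot be compared coefficient-by-coefficient (they are expansions of the same polynomial in different ``bases'') and that reducing modulo $x-1$ is exactly what extracts the product of even-index $\beta$'s. This reduction is available precisely because the even-index partial quotient $a_{2j+2,g_d}(x)=x-1$ vanishes at $1$, a feature that simultaneously simplifies the recurrence specialization and makes $q_{2j}(1)$ a clean product of the $\beta_{2i}$.
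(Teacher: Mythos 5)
Your proposal is correct and follows essentially the same route as the paper: both arguments substitute $x^3$ into the recurrence~\eqref{eq2_th2}, invoke Lemma~\ref{lem5} to identify $q_{2k+2}(x^3)$ with $q_{6k+6}(x)$, and then reduce modulo $x-1$ (your evaluation at $x=1$ is the same reduction), cancelling the nonzero quantity $q_{6k}(1)$ at the end. The only cosmetic difference is that you descend all the way to $q_0$ via the product formula $q_{2j}(1)=\beta_2\cdots\beta_{2j}$, whereas the paper chains the congruences only three steps from $q_{6k+6}$ down to $q_{6k}$.
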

\proof By~\eqref{eq2_th2} we have
$$
q_{2k+2}(x) = (x-1)q_{2k+1}(x)+\beta_{2k+2}q_{2k}(x).
$$
We substitute $x^3$ in place of $x$, use Lemma~\ref{lem5} and consider the resulting
equation modulo $x-1$ to get
\begin{equation}\label{eq1_prop2}
q_{6k+6}(x) \equiv \beta_{2k+2}q_{6k}(x)\pmod{x-1}.
\end{equation}

Consequent usage of formula~\eqref{eq2_th2} for $q_{6k+6}(x)$ down
to $q_{6k+2}(x)$ leads to
\begin{eqnarray*}
&& q_{6k+6}(x) = (x-1)q_{6k+5}(x) + \beta_{6k+6}q_{6k+4}(x) \equiv \beta_{6k+6}q_{6k+4}(x)\\
&& = \beta_{6k+6}\beta_{6k+4}q_{6k+2}(x) \equiv
\beta_{6k+6}\beta_{6k+4}\beta_{6k+2}q_{6k}(x)\pmod{x-1}.
\end{eqnarray*}
Hence we get $\beta_{2k+2}q_{6k}(x)\equiv
\beta_{6k+6}\beta_{6k+4}\beta_{6k+2}q_{6k}(x)\pmod{x-1}$. Finally
from Lemma~\ref{lem5}, $\gcd(x-1,q_{6k}(x))\mid
\gcd(p_{6k}(x),q_{6k}(x)) = 1$, therefore we can divide the
congruence by $q_{6k}(x)$. This finishes the proof of the
proposition.
\endproof

More relations between values of $\beta$ can be derived by
considering coefficients with the highest
degrees of $x$ in $q_k(x)$. More exactly, write the polynomials $q_k(x)$ in the
following form  (recall Theorem~\ref{th1})
\begin{eqnarray*}
&& q_{2k}(x) = q_{k,h_d}(x^3) = x^{3k} +
a_{2k}x^{3k-3}+b_{2k}x^{3k-6}+\ldots;\\
&& q_{2k+1}(x) = (x^2+x+1) q_{k,u_d}(x^3) =
(x^2+x+1)(x^{3k}+a_{2k+1}x^{3k-3}+b_{2k+1}x^{3k-6}+\ldots).
\end{eqnarray*}

\begin{proposition}
Coefficients $a_k$ and $\beta_k$, $k\in\N$, are related by the following
equations
$$
a_{6k}=0,\quad a_{2k}-a_{2k-1} = \beta_{2k}-1,\quad a_{2k+1}-a_{2k}
= \beta_{2k+1}.
$$
In particular, these equations imply
\begin{equation}\label{eq_prop3}
\sum_{i=1}^6 \beta_{6k+i} = 3.
\end{equation}
\end{proposition}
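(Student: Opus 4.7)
The plan is to compare coefficients of well-chosen powers of $x$ in the two recurrences of Theorem~\ref{th2}, specialized to $d=3$. The structural input, already built into the displayed expansions, is that by Theorem~\ref{th2} one has $q_{2k}(x) = q_{k,h_3}(x^3)$ as a polynomial in $x^3$ and $q_{2k+1}(x)/(x^2+x+1) = q_{k,u_3}(x^3)$ also as a polynomial in $x^3$; this forces every intermediate (non-multiple-of-$3$) coefficient to vanish automatically and is what makes the coefficient comparison tractable.

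For $a_{6k}=0$, I would apply Lemma~\ref{lem5}: substituting the expansion of $q_{2k}$ into $q_{6k}(x) = q_{2k}(x^3)$ yields $q_{6k}(x) = x^{9k} + a_{2k}x^{9k-9} + b_{2k}x^{9k-18}+\cdots$, so the $x^{9k-3}$-coefficient, which by definition is $a_{6k}$, vanishes. For the two difference relations I would plug the expansions into the recurrences of Theorem~\ref{th2} and read off a single coefficient in each. In the even recursion $q_{2k}(x) = (x-1)q_{2k-1}(x) + \beta_{2k}q_{2k-2}(x)$, the coefficient of $x^{3k-3}$ on the left is $a_{2k}$; the leading block $(x^2+x+1)x^{3k-3}$ of $q_{2k-1}$ contributes $-1$ at this degree after multiplication by $(x-1)$, the next block $(x^2+x+1)a_{2k-1}x^{3k-6}$ contributes $+a_{2k-1}$, and $\beta_{2k}q_{2k-2}$ contributes $\beta_{2k}$; this yields $a_{2k}-a_{2k-1}=\beta_{2k}-1$. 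In the odd recursion $q_{2k+1}(x) = (x^2+x+1)q_{2k}(x) + \beta_{2k+1}q_{2k-1}(x)$, the coefficient of $x^{3k-1}$ on the left equals $a_{2k+1}$; on the right, $(x^2+x+1)q_{2k}$ contributes $a_{2k}$ (only the cross term $x^2\cdot a_{2k}x^{3k-3}$ survives, since $q_{2k}$ has no $x^{3k-1}$ or $x^{3k-2}$ term) and $\beta_{2k+1}q_{2k-1}$ contributes $\beta_{2k+1}$ from the leading $x^{3k-1}$ of $q_{2k-1}$; this yields $a_{2k+1}-a_{2k}=\beta_{2k+1}$.

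The sum identity is then a short telescoping: substituting $\beta_{2j}=a_{2j}-a_{2j-1}+1$ and $\beta_{2j+1}=a_{2j+1}-a_{2j}$ into $\sum_{i=1}^{6}\beta_{6k+i}$ makes the $a$-terms telescope to $a_{6k+6}-a_{6k}$, while each of the three even-indexed summands contributes an extra $+1$; using $a_{6k}=a_{6k+6}=0$ from the first identity, the total equals $3$. I do not anticipate any conceptual obstacle; the whole argument is careful but routine coefficient bookkeeping, whose only subtle ingredient is the systematic vanishing of non-multiples-of-$3$ degrees in $q_{2k}$ and in $q_{2k+1}/(x^2+x+1)$, guaranteed by the structural description of the convergents in Theorem~\ref{th2}.
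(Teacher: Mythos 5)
Your proposal is correct and follows essentially the same route as the paper: $a_{6k}=0$ via Lemma~\ref{lem5} and the fact that $q_{6k}(x)=q_{2k}(x^3)$ is a polynomial in $x^9$, the two difference relations by comparing one coefficient in each recurrence of Theorem~\ref{th2} (the paper divides the odd recursion by $x^2+x+1$ and reads off $x^{3k-3}$ where you read off $x^{3k-1}$ directly, a trivially equivalent step), and the same telescoping sum at the end.
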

\proof Firstly, by Lemma~\ref{lem5} and Theorem~\ref{th1}, $q_{6k}(x) = q_{2k}(x^3) =
q_{k,h_d}(x^9)$, therefore the coefficient at $x^{9k-3}$ in
$q_{6k}(x)$ is zero.

Secondly, we compare the coefficients at several leading degrees of
$x$ in Equation~\eqref{eq_q2kp2_and_beta}.
$$
x^{3k}+a_{2k}x^{3k-3}+\ldots=q_{2k}(x) = (x^3-1) (x^{3k-3} +
a_{2k-1}x^{3k-6}+\ldots) + \beta_{2k} (x^{3k-3} + \ldots).
$$
Comparison of the coefficients at $x^{3k-3}$ gives us the equation
$a_{2k}-a_{2k-1} = \beta_{2k}-1$.

Thirdly, for $q_{2k+1}(x)$ we have, by using Equation~\eqref{eq_converg_beta},
$$
(x^2+x+1) (x^{3k}+a_{2k+1}x^{3k-3}+\ldots) = q_{2k+1}(x) =
(x^2+x+1)(x^{3k}+a_{2k}x^{3k-3}+\ldots)
$$$$+\beta_{2k+1}(x^2+x+1)(x^{3k-3}+\ldots)
$$
Then dividing by $x^2+x+1$ and comparing the coefficients at
$x^{3k-3}$ give us %the last equation for the coefficients $a_k$.
$a_{2k+1}-a_{2k}
= \beta_{2k+1}$.

Finally we sum up six equations of the above form to get
$$
0 = a_{6k+6} - a_{6k} = \sum_{i=1}^6(a_{6k+i}-a_{6k+i-1}) = \sum_{i=1}^6 \beta_{6k+i} - 3.
$$
\endproof

One can compare the coefficients at the preceding powers of $x$ in
the formulae~\eqref{eq1_th2} and~\eqref{eq2_th2} for $q_k(x)$ to get
the relations between $b_k,a_k$ and $\beta_k$. The result is
presented in Proposition~\ref{proposition_a_k_b_k_beta_k} below. We
leave the details of the proof to the enthusiastic reader.

\begin{proposition} \label{proposition_a_k_b_k_beta_k}
Coefficients $b_k, a_k$ and $\beta_k$ are related by the following
equations
$$
b_{6k}=0,\quad b_{2k}-b_{2k-1} = \beta_{2k}a_{2k-2}-a_{2k-1},\quad
b_{2k+1}-b_{2k} = \beta_{2k+1}a_{2k-1}.
$$
In particular, these equations imply
\begin{equation}\label{eq_prop3}
\sum_{1\le i,j \le 6\atop j-i>1} \beta_{6k+i}\beta_{6k+j} =
3+\beta_{6k}\beta_{6k+1}.
\end{equation}
\end{proposition}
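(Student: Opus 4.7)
The plan mirrors the proof of the previous proposition, but carries the expansion of each $q_k(x)$ one coefficient deeper.

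First, I would read $b_{6k}=0$ directly off Lemma~\ref{lem5}: the identity $q_{6k}(x)=q_{k,h_d}(x^9)$ forces every exponent appearing in $q_{6k}(x)$ to be a multiple of $9$, so in particular the coefficient of $x^{9k-6}$ vanishes.

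Next, I would extract the two difference formulas by pushing the monic recursions to the $x^{3k-6}$ coefficient. For the even case, writing $q_{2k-1}(x)=(x^2+x+1)(x^{3k-3}+a_{2k-1}x^{3k-6}+b_{2k-1}x^{3k-9}+\cdots)$ gives
$$(x-1)q_{2k-1}(x)=(x^3-1)(x^{3k-3}+a_{2k-1}x^{3k-6}+b_{2k-1}x^{3k-9}+\cdots).$$
Substituting into $q_{2k}(x)=(x-1)q_{2k-1}(x)+\beta_{2k}q_{2k-2}(x)$ together with the analogous expansion of $q_{2k-2}(x)$ and reading off the coefficient of $x^{3k-6}$ yields $b_{2k}-b_{2k-1}=\beta_{2k}a_{2k-2}-a_{2k-1}$. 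For the odd case, substitute into $q_{2k+1}(x)=(x^2+x+1)q_{2k}(x)+\beta_{2k+1}q_{2k-1}(x)$, divide both sides by the common factor $x^2+x+1$, and compare the coefficient of $x^{3k-6}$ to obtain $b_{2k+1}-b_{2k}=\beta_{2k+1}a_{2k-1}$.

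For the closed-form identity in the $\beta$'s, I would exploit the telescoping sum
$$0=b_{6k+6}-b_{6k}=\sum_{i=1}^{6}(b_{6k+i}-b_{6k+i-1}),$$
substitute the two difference formulas above, and eliminate the $a$'s using the previous proposition. Namely, $a_{6k}=0$ and, by iterating $a_{2k}-a_{2k-1}=\beta_{2k}-1$ and $a_{2k+1}-a_{2k}=\beta_{2k+1}$, each $a_{6k+j}$ for $0\le j\le 5$ becomes an explicit partial sum of $\beta_{6k+1},\ldots,\beta_{6k+j}$ shifted by an integer; the single out-of-packet value $a_{6k-1}$ equals $1-\beta_{6k}$, read off from the shifted identity $\sum_{i=1}^6\beta_{6(k-1)+i}=3$.

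The main obstacle is not conceptual but purely algebraic bookkeeping. After substitution, the affine-in-$\beta$ contributions collapse via $\sum_{i=1}^6\beta_{6k+i}=3$ to the constant $-3$; the $a_{6k-1}$ term produces exactly $-\beta_{6k+1}\beta_{6k}$; and the remaining quadratic terms reorganize as
$$\beta_{6k+1}\beta_{6k+3}+\beta_{6k+4}(\beta_{6k+1}+\beta_{6k+2})+\beta_{6k+5}(\beta_{6k+1}+\beta_{6k+2}+\beta_{6k+3})+\beta_{6k+6}(\beta_{6k+1}+\beta_{6k+2}+\beta_{6k+3}+\beta_{6k+4}),$$
which is precisely the ten-term sum $\sum_{1\le i<j\le 6,\,j-i>1}\beta_{6k+i}\beta_{6k+j}$. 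Rearranging gives the claimed identity. The difficulty lies entirely in keeping track of signs and partial sums across the ten cross-terms; no new structural idea beyond those already used in the previous proposition is required.
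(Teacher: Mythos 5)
Your proposal is correct and follows exactly the route the paper indicates (comparing coefficients at the next power $x^{3k-6}$ in the monic recursions, then telescoping $b_{6k+6}-b_{6k}=0$ and eliminating the $a$'s via the previous proposition); the paper itself leaves these details to the reader. Your bookkeeping checks out: the affine terms do collapse to $-3$ via $\sum_{i=1}^6\beta_{6k+i}=3$, the term $\beta_{6k+1}a_{6k-1}$ with $a_{6k-1}=1-\beta_{6k}$ produces $-\beta_{6k}\beta_{6k+1}$, and the quadratic terms assemble into the ten cross-products with $j-i>1$.
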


By considering more coefficients we can get more equations relating
values $\beta_{6k+1}, \ldots, \beta_{6k+6}$ with the previous values
of $\beta_i$, $i\leq 6k$. However they become overwhelmingly
complicated. Perhaps one can use some tricks similar to those in
Proposition~\ref{prop2} to find simpler relations between different
values of $\beta_k$. It would be very interesting to discover such
relations.

\section{Mahler numbers}

In this section we will consider the Mahler numbers $f_d(a)$, where
$a\ge 2$ is an integer and $f_d(x)$ is the Laurent series defined
by~\eqref{def_f_d}. It appears that some of approximation
properties of these numbers can be derived from the study of the continued fraction
of the function $f_d(x)$. In this section, we investigate the
following problem:

\begin{problem}
Given $a,d\in\ZZ, a,d\ge 2$, determine whether $f_d(a)$ is badly approximable.
\end{problem}

%Recall that $x$ is called badly approximable if there exists a
%positive constant $c = c(x)$ such that
%$$
%\left|x-\frac{p}{q}\right| > \frac{c}{q^2}
%$$
%for all integers $p,q$ with $q\neq 0$.

%For $d\ge 4$ the irrationality exponent of $f_d(r)$ for any rational
%$r>1$ is strictly bigger than 2 and therefore $f_d(r)$ is very well
%approximable. %Proposition~\ref{proposition_wa}

%By Proposition~\ref{proposition_wa}, the number $f_d(a)$ is well approximable for any $a,d\in\N$, $d\geq 4$, $a\geq 2$.

\subsection{The case $d\geq 4$} \label{subsection_dgeq4}

Problem~A is relatively easy in the case $d\geq 4$. For this case the answer follows from a simple Proposition~\ref{proposition_wa_numbers} below. %The following simple proposition answers

Recall that the exponent of irrationality of $x\in\RR$ is defined to be
the supremum of all positive real numbers $\tau$ such that the
inequality $ \left|x - \frac{p}{q}\right| < q^{-\tau} $ has
infinitely many integer solutions $p,q$ with $q\neq 0$. It is easy to verify with the definitions that the irrationality exponent of a badly approximable
number necessarily equals two.

\begin{proposition} \label{proposition_wa_numbers}
Let $d\in\N$, $d\geq 4$ and let $a\in\N$, $a\geq 2$. Then the number
$f_d(a)$ is well approximable. Moreover, the exponent of
irrationality of $f_d(a)$ is at least $d-1$.
\end{proposition}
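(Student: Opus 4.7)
The plan is to imitate, in the numerical setting, the construction used in the proof of Proposition~\ref{proposition_wa}: specializing the truncated products $r_k(x)=\prod_{t=0}^{k}(1-x^{-d^t})$ at $x=a$ yields rational numbers that approximate $f_d(a)$ exponentially well relative to their denominators, with the corresponding approximation rate tending to $d-1$.

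For each $k\in\NN$, I would write
$$r_k(a)=\prod_{t=0}^{k}\bigl(1-a^{-d^t}\bigr)=\frac{p_k}{q_k},\qquad q_k:=a^{N_k},\ N_k:=\frac{d^{k+1}-1}{d-1},$$
with $p_k\in\ZZ$. There is no need to reduce to lowest terms, since the definition of the irrationality exponent allows any integer denominator. Then I would control the error via the identity
$$f_d(a)-r_k(a)=r_k(a)\left(\prod_{t=k+1}^{\infty}\bigl(1-a^{-d^t}\bigr)-1\right).$$
Since $|r_k(a)|\leq 1$ and the infinite tail satisfies $\bigl|\prod_{t=k+1}^{\infty}(1-a^{-d^t})-1\bigr|\leq C\cdot a^{-d^{k+1}}$ for some constant $C=C(a)$ (the series $\sum_{t\geq k+1}a^{-d^t}$ being dominated by its first term because consecutive exponents grow geometrically), one obtains
$$\left|f_d(a)-\frac{p_k}{q_k}\right|\leq C\cdot a^{-d^{k+1}}=C\cdot q_k^{-d^{k+1}/N_k}.$$

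Finally, the exponent $d^{k+1}/N_k=(d-1)\,d^{k+1}/(d^{k+1}-1)$ tends to $d-1$ as $k\to\infty$, and the $p_k/q_k$ are pairwise distinct because $r_{k+1}(a)-r_k(a)=-r_k(a)\,a^{-d^{k+1}}\neq 0$. Hence for every $\tau<d-1$ the inequality $|f_d(a)-p/q|<q^{-\tau}$ has infinitely many integer solutions, which is exactly the claim that the irrationality exponent of $f_d(a)$ is at least $d-1$. In particular, for $d\geq 4$ this exponent is at least $3>2$, ruling out $f_d(a)$ being badly approximable. I do not foresee any substantial obstacle; the only delicate point is the tail estimate, and even there a very crude geometric bound is sufficient because the spacing $d^{t+1}=d\cdot d^t$ makes the tail series absolutely controlled by its leading term, with constants independent of $k$.
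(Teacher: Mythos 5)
Your proposal is correct and follows essentially the same route as the paper: specialize the truncated products $r_k(x)=\prod_{t=0}^{k}(1-x^{-d^t})$ at $x=a$, bound the tail by a constant times $a^{-d^{k+1}}$, and compare with the denominator $a^{(d^{k+1}-1)/(d-1)}$. The only cosmetic difference is that the paper observes directly that $q_k^{-(d-1)}=a\cdot a^{-d^{k+1}}\ge 2a^{-d^{k+1}}$ bounds the error for every $k$, whereas you reach the same conclusion by letting $\tau<d-1$ and passing to the limit; both are fine.
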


\proof The proof is very much similar to the proof of
Proposition~\ref{proposition_wa}. With the reference to the notation
of the proof~of  Proposition~\ref{proposition_wa}, note that
the coefficient with the highest degree in the series $f_d(x) -
r_k(x)$ is 1. So substituting $a$ in place of $x$ we find
$$
\left|f_d(a) - r_k(a)\right|\leq\sum_{t=d^{k+1}}^\infty a^{-t}\leq
 \frac{2}{a^{d^{k+1}}},
$$
whilst the denominator $q_k$ of the rational fraction $r_k(a)$ is at
most $a^{\frac{d^{k+1}-1}{d-1}}$. The estimate $q_k^{-(d-1)} =
a^{-(d^{k+1}-1)}\ge 2a^{-d^{k+1}}$ proves that $f_d(a)$ has exponent
of irrationality at least $d-1\geq 3$ and so $f_d(a)$ is not badly
approximable.
\endproof

\hidden{
On the other hand, for $d=2,3$ the function $f_n(x)$ is badly
approximable, %therefore we can not get lower estimate on the
%irrationality exponent of $f_n(a)$ except trivial $v(f_n(a))\ge 2$.
%Moreover it is known that $v(f_2(a)) = 2$. !!!OTHER REFERENCES!!!
so necessarily $v(f_2(a)) = 2$. !!!OTHER REFERENCES!!!

}

Proposition~\ref{proposition_wa_numbers} immediately tells us that
for $d\ge 4$, $f_d(a)$ are not badly approximable for any integer
$a\geq 2$. So it remains to study Problem~A for the cases $d=2$ and
$d=3$. In these two cases exponent of irrationality of $f_d(a)$,
$a\in\N$, $a\geq 2$, $d=2,3$, is 2. For $d=2$ this is proved
in~\cite{B2011} and for $d=3$ it follows
from~\cite[Theorem~2.5]{B2015}. Therefore the solution to Problem~A
in the cases $d=2,3$ needs more subtle considerations.

\subsection{The case $d=2$} \label{subsection_deq2}

The case $d=2$ is studied in~\cite[Theorem
5.1]{badziahin_zorin_2014} where the following theorem is proved.
Recall that $a\mid\mid b$ means that $a$ divides $b$ but $a^2$ does
not.

\begin{theorembz}
Let $p_t(x)/q_t(x)$ be the convergents of the series $g_2(x)$.
Assume that there exist positive integers $n,t,p$ such that
\begin{enumerate}
\item $p$ is a prime number and $p\,||\,a^{2^n}-1$;
\item 2 is a primitive root modulo $p^2$.
\item $p\mid\mid q_t(1)$;
\item $q_t'(1)\not\equiv 0\pmod p$.
\end{enumerate}
Then $f_2(a)$ is not badly approximable.
\end{theorembz}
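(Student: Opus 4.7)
The goal is to show that for every $\varepsilon>0$ there exists a rational $P/Q$ with $|f_2(a)-P/Q|<\varepsilon/Q^2$, which is exactly the failure of bad approximability. The input will be the $t$-th functional convergent $p_t(x)/q_t(x)$ of $g_2(x)$ furnished by hypotheses~3 and~4. The idea is to lift this convergent to very high scales by iterating the Mahler-type relation $g_2(x^2)=g_2(x)/(x-1)$ (this is~\eqref{eq3_lem3} specialized to $d=2$), substitute $x=a$, and then extract from the resulting fraction a large common $p$-adic factor by using all four hypotheses in concert.

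Iterating the functional equation gives $g_2(a)=g_2(a^{2^m})\prod_{j=0}^{m-1}(a^{2^j}-1)$, so for each $m\ge n$ the rational number $P_m/Q_m$ with
$$
Q_m=q_t(a^{2^m}),\qquad P_m=p_t(a^{2^m})\prod_{j=0}^{m-1}(a^{2^j}-1),
$$
approximates $g_2(a)$. Since $f_2$ is badly approximable in the case $d=2$, Lemma~\ref{lem4} together with Theorem~\ref{th2} forces the rate of approximation of $p_t/q_t$ to equal $1$, so a direct calculation yields $|g_2(a)-P_m/Q_m|=O(1/Q_m^2)$ with an implicit constant depending only on $t$ and $a$. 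Hence the raw fractions $P_m/Q_m$ only deliver the generic quality $O(1/Q^2)$; to beat it we must cancel large powers of $p$ simultaneously from $P_m$ and $Q_m$.

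The $p$-adic valuation of $P_m$ is the easy side. Hypothesis~2 forces $p$ to be odd, so the lifting-the-exponent lemma applied to hypothesis~1 yields $v_p(a^{2^j}-1)=1$ for every $j\ge n$, whence $v_p(P_m)\ge m-n$. The hard part is to make $v_p(Q_m)=v_p(q_t(a^{2^m}))$ arbitrarily large. Here hypotheses~3 and~4 allow Hensel's lemma to be applied to $q_t$ at the base point $x=1$, producing a unique root $x_0\in\ZZ_p$ with $v_p(x_0-1)=1$; near $x_0$ one has $v_p(q_t(y))=v_p(y-x_0)$. The task thus reduces to showing that the squaring orbit $\{a^{2^m}\}_{m\ge n}\subset 1+p\ZZ_p$ accumulates at $x_0$ to arbitrary $p$-adic precision. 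Via the $p$-adic logarithm (an isometry $1+p\ZZ_p\to p\ZZ_p$ for odd $p$, which conjugates squaring to doubling), this becomes the question of whether for every $k$ the congruence $2^{m-n}\equiv s/u\pmod{p^k}$ has infinitely many solutions in $m$, where $pu=\log a^{2^n}$ and $ps=\log x_0$ are $p$ times a unit in $\ZZ_p$ (the former by hypothesis~1, the latter by hypothesis~3). Hypothesis~2, combined with the classical fact that a primitive root modulo $p^2$ remains a primitive root modulo every $p^k$ (valid for odd $p$), then gives an affirmative answer.

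Putting it together, for each $k$ I select $m$ large enough that both $v_p(P_m)\ge k$ and $v_p(Q_m)\ge k$, cancel a factor of $p^k$ from $P_m/Q_m$, and multiply by $a$ to pass from $g_2(a)$ to $f_2(a)$. The resulting $P/Q$ satisfies $|f_2(a)-P/Q|\le C/(p^{2k}Q^2)$ for a constant $C$ independent of $k$, and since the denominators $Q_m$ grow doubly exponentially, varying $k$ yields infinitely many distinct such approximations. Letting $k\to\infty$ shows that $f_2(a)$ is not badly approximable. The main obstacle is the $p$-adic dynamical claim: the full strength of hypothesis~2 (primitivity modulo $p^2$, not merely modulo~$p$) is what lifts the orbit density from finite depth to every depth $p^k$, and the strict ``$\|$'' clauses in hypotheses~1 and~3 are precisely what guarantee that the quantities $u$ and $s$ remain units throughout the argument.
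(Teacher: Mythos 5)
Your proposal is correct and follows essentially the same route as the paper's argument (given there in detail for the stronger Theorem~\ref{th4}): lift the convergent $p_t/q_t$ through the Mahler equation to obtain near-optimal approximations $P_m/Q_m$, extract $p^{m-n}$ from the numerator via the lifting-the-exponent observation, locate a $p$-adic root of $q_t$ near $1$ by Hensel using conditions~3 and~4, and show the squaring orbit $a^{2^m}$ approaches that root to arbitrary depth using the primitivity of $2$ modulo $p^k$. Your only deviation is phrasing the orbit-density step through the $p$-adic logarithm isometry rather than directly through multiplicative orders in $(\ZZ/p^m\ZZ)^*$, which is an equivalent formulation of the same computation.
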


This theorem allows us to show that $f_2(a)$ is not badly
approximable for many integer values of $a$.
%but not for all of
%The smallest value of unknown status is $a=15$.
However, as explained in~\cite{badziahin_zorin_2014}, there are some
integer values $a$ that can not be covered by Theorem~BZ2. The
smallest uncovered integer is $a=15$.

%Here below we provide a stronger version of Theorem~BZ2,
%Theorem~\ref{th4}, which covers this and many other extra values of
%$a$. We believe that it allows to show that $f_2(a)$ is not badly
%approximable for all $a\ge 2$ but we do not know how to find the
%sets of parameters for this theorem to cover all the integer values
%$a\geq 2$.

Here we provide a stronger version of Theorem~BZ2, which covers the
case $a=15$ as well as many other extra values of $a$. For this
stronger statement, Theorem~\ref{th4}, we did not detect any
constraints which prevent Theorem~\ref{th4} to be applied to any
integer $a\geq 2$. So we believe that this theorem allows to prove
that $f_2(a)$ is not badly approximable for all $a\geq 2$. On the
other hand the conditions in Theorem~\ref{th4} depend on several
parameters and we do not know a general procedure which provides
these parameters for a generic $a$.
%$ but we do not know how to find the sets of parameters for this theorem to cover all the integer values $a\geq 2$.

In what follows, we denote by $\Gamma(a,p^k)$, $k\in\N$, $a\in\Z$, the
multiplicative subgroup of $\ZZ/p^k\ZZ$ generated by $a$.

\begin{theorem}\label{th4}
Let $p_t(x)/q_t(x)$ be the convergents of the series $g_2(x)$.
Assume that there exist positive integers $n_0,t,p$ such that
\begin{enumerate}
\item $p$ is an odd prime number and $p\,||\,a^{2^{n_0}}-1$;
\item \label{condition_Weiferich} $|\Gamma(2,p^2)| = p|\Gamma(2,p)|$;
\item  \label{condition_2power} $q_t(a^{2^{n_0}})\equiv 0\pmod{p^2}$;
\item $q_t'(1)\not\equiv 0\pmod p$.
\end{enumerate}
Then $f_2(a)$ is not badly approximable.
\end{theorem}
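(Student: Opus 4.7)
The plan is to exhibit an infinite family of rational approximations $P'_j/Q'_j$ to $f_2(a)$ with $|f_2(a)-P'_j/Q'_j|\cdot(Q'_j)^2\to 0$, which by the standard characterization of bad approximability shows that $f_2(a)$ is not badly approximable. Starting from the convergent $p_t/q_t$ of $g_2(x)$ given by hypothesis~3, and noting $h_2=g_2$ since $d=2$, an $n$-fold iteration of Lemma~\ref{lem1} yields that
$$
\frac{P_n(x)}{Q_n(x)}:=\frac{R_n(x)\,p_t(x^{2^n})}{q_t(x^{2^n})},\qquad R_n(x):=\prod_{j=0}^{n-1}\bigl(x^{2^j}-1\bigr),
$$
is a convergent of $g_2(x)$ of rate exactly $1$ (by Lemma~\ref{lem4}, since $f_2$ is badly approximable~\cite{badziahin_zorin_2014}, every partial quotient of $g_2$ is linear). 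Evaluating at $x=a$ gives the borderline estimate $|g_2(a)-P_n(a)/Q_n(a)|\le C\,Q_n(a)^{-2}$ with $C$ independent of $n$.

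To beat this bound I would cancel a growing power of $p$ from $P_n(a)$ and $Q_n(a)$. The divisibility of $Q_n(a)=q_t(a^{2^n})$ is controlled by a $p$-adic root of $q_t$: hypotheses~1 and~3 give $q_t(1)\equiv 0\pmod p$ (since $a^{2^{n_0}}\equiv 1\pmod p$), and hypothesis~4 allows Hensel's lemma to produce a unique $r_*\in 1+p\Z_p$ with $q_t(r_*)=0$. The sharpened form of hypothesis~3 --- requiring $q_t(a^{2^{n_0}})\equiv 0\pmod{p^2}$ rather than the weaker $p\,\|\,q_t(1)$ of Theorem~BZ2 --- is exactly what is needed to conclude, by Hensel's uniqueness modulo $p^2$, that $r_*\equiv a^{2^{n_0}}\pmod{p^2}$. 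Since the root is simple, $v_p(q_t(y))=v_p(y-r_*)$ for all $y\in 1+p\Z_p$, so the problem reduces to making $a^{2^n}$ as close to $r_*$ in $\Z_p$ as we wish.

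The heart of the proof, and where hypothesis~2 enters, is the following. Passing to the $p$-adic logarithm with $r:=a^{2^{n_0}}$, set $L:=\log r$ and $L_*:=\log r_*\in p\Z_p^*$, with $L\equiv L_*\pmod{p^2}$ (by $r\equiv r_*\pmod{p^2}$). Writing $L=p\lambda$ and $L_*=p\lambda_*$, the quotient $\lambda_*/\lambda$ lies in $1+p\Z_p$, and since $\log$ is an isometry on $1+p\Z_p$ the condition $v_p(a^{2^{n_0+m}}-r_*)\ge V$ is equivalent to $v_p(2^m-\lambda_*/\lambda)\ge V-1$. Now in the decomposition $\Z_p^*\cong\mu_{p-1}\times(1+p\Z_p)$ (for odd $p$) via the Teichm\"uller lift $\omega$, the closure $\overline{\{2^m\}}$ factors as $\langle\omega(2)\rangle\times\overline{\{\langle 2\rangle^m\}}$ where $\langle 2\rangle:=2/\omega(2)$. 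Hypothesis~2, equivalent (via the identity $\langle 2\rangle^{d_p}=2^{d_p}$ with $d_p:=|\Gamma(2,p)|$, combined with the lifting-the-exponent lemma) to $v_p(\langle 2\rangle-1)=1$, means precisely that $\log\langle 2\rangle$ generates $p\Z_p$ as a $\Z_p$-module, whence $\overline{\{\langle 2\rangle^m\}}=1+p\Z_p$. In particular $1+p\Z_p\subseteq\overline{\{2^m\}}$ contains $\lambda_*/\lambda$, providing $m_j\to\infty$ with $v_p(q_t(a^{2^{n_0+m_j}}))\ge j$.

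To conclude, set $n_j=n_0+m_j$. By the lifting-the-exponent lemma and hypothesis~1, $v_p(R_{n_j}(a))\ge m_j$, which grows to infinity. Hence $p^{\min(m_j,j)}$ divides $\gcd(P_{n_j}(a),Q_{n_j}(a))$, and cancellation produces a reduced fraction $P'_j/Q'_j$ with $Q'_j\le Q_{n_j}(a)\,p^{-\min(m_j,j)}$ and unchanged approximation error, yielding
$$
\bigl|f_2(a)-aP'_j/Q'_j\bigr|\cdot(Q'_j)^2\;\le\;C'\,p^{-2\min(m_j,j)},
$$
which tends to zero as $j\to\infty$, contradicting bad approximability of $f_2(a)$. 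The main obstacle is paragraph~3: establishing cleanly the equivalence of hypothesis~2 with the inclusion $1+p\Z_p\subseteq\overline{\{2^m\}}$, and tracking the $p$-adic bookkeeping to ensure $\lambda_*/\lambda$ truly lands in the pro-$p$ factor --- the latter depending crucially on the upgraded congruence $r_*\equiv a^{2^{n_0}}\pmod{p^2}$ made available by the stronger form of hypothesis~3.
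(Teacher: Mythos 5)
Your proposal is correct and follows the same overall strategy as the paper's proof: reduce to $g_2(a)$, iterate Lemma~\ref{lem1} (using $h_2=g_2$) to get the convergents $R_n(x)p_t(x^{2^n})/q_t(x^{2^n})$, obtain the uniform estimate $|g_2(a)-P_n(a)/Q_n(a)|\le C\,Q_n(a)^{-2}$, extract $p^{n-n_0}$ from the numerator via hypothesis~1, locate a Hensel root $r_*\equiv a^{2^{n_0}}\pmod{p^2}$ of $q_t$ via hypotheses~3 and~4, and then use hypothesis~2 to show that $a^{2^n}$ approaches $r_*$ $p$-adically. The one place where your implementation genuinely differs is this last density step: the paper proves a separate lemma (Lemma~\ref{lemma_Gamma}) showing $|\Gamma(2,p^{m+1})|=p^m|\Gamma(2,p)|$ by an elementary Hensel argument applied to $x^{|\Gamma(2,p^m)|}-1$, and then argues with orders in $(\Z/p^m\Z)^*$ --- the element $a^{2^{n_0}}$ has exact order $p^{m-1}$ modulo $p^m$, so some $a^{2^{n_0}s}$ with $s\equiv 1\pmod p$ hits $r_*$ modulo $p^m$, and $\Gamma(2,p^{m-1})$ being the full preimage of $\Gamma(2,p)$ supplies $t_m$ with $2^{t_m}\equiv 2^{n_0}s\pmod{p^{m-1}}$. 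Your version packages exactly the same content into the isomorphism $\Z_p^*\cong\mu_{p-1}\times(1+p\Z_p)$ and the $p$-adic logarithm: hypothesis~2 becomes $v_p(\langle 2\rangle-1)=1$, i.e.\ $\overline{\{2^m\}}\supseteq 1+p\Z_p$, and the upgraded hypothesis~3 becomes the statement that $\log r_*/\log r$ lands in $1+p\Z_p$. The paper's route is self-contained and elementary (finite modular arithmetic, no $p$-adic analysis); yours is shorter and makes the role of each hypothesis more transparent, at the cost of invoking the isometry properties of $\log$ on $1+p\Z_p$ and the fact that the closure of a sub-semigroup of a compact group is a group. Both are sound; only minor bookkeeping (e.g.\ ruling out $r_*=a^{2^{n}}$ so that $m_j\to\infty$, and justifying the uniform constant $C$, which the paper delegates to Lemma~4.3 of the earlier work) would need to be spelled out in a final write-up.
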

%\begin{remark} \label{rem_Weiferich}
{\noindent \bf Remark.} Condition~\ref{condition_Weiferich} of
Theorem~\ref{th4} is satisfied for the most of primes we know of.
More precisely, the only primes which do not satisfy this condition
are the so called Weiferich primes, i.e. the primes $p$ such that
$p^2$ divides $2^{p-1}-1$. Indeed, if $p$ is a non-Weiferich prime
then property~\ref{condition_Weiferich} of Theorem~\ref{th4} follows
from Lemma~\ref{lem_W} below. Weiferich primes were rigorously
studied. Currently only two of them are known: 1093 and 3511, and no
more Weiferich primes exist~\cite{R2004} below $3\times 10^{15}$.

%Indeed, the non-Weiferich primes are characterized by the property
%$2^{p-1}\not\equiv 1 \pmod{p^2}$. So for the primes which are not
%Weiferich, property~\ref{condition_Weiferich} of Theorem~\ref{th4}
%follows from Lemma~\ref{lem_W} below. \hidden{ Indeed, Weiferich
%primes are characterized by the property $2^{p-1}\equiv 1
%\pmod{p^2}$, so if a prime $p$ is not a Weiferich prime then we have
%$2^{p-1}\not\equiv 1 \pmod{p^2}$. Then,
%property~\ref{condition_Weiferich} of Theorem~\ref{th4} follows from
%Lemma~\ref{lem_W} below. }
%
%Today they know only two Weiferich primes, 1093 and 3511, and it is known that there is no other Weiferich primes~\cite{R2004} smaller than $3\times 10^{15}$.
%\end{remark}

\begin{lemma} \label{lem_W}
Let $p$ be an odd prime and assume that
\begin{equation} \label{lem_W_assumption}
2^{p-1}\not\equiv 1 \pmod{p^2}.
\end{equation}
Then $ |\Gamma(2,p^2)| = p|\Gamma(2,p)|.$
\end{lemma}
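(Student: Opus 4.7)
The plan is to compare directly the multiplicative orders of $2$ modulo $p$ and modulo $p^2$. Write $d := |\Gamma(2,p)|$ and $D := |\Gamma(2,p^2)|$; the goal reduces to proving $D = pd$.

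First I will exploit the canonical reduction map $\pi \colon (\Z/p^2\Z)^\times \to (\Z/p\Z)^\times$. It is surjective with kernel of order $p$, and restricts to a surjection $\langle 2\rangle_{p^2} \twoheadrightarrow \Gamma(2,p)$. The kernel of this restriction is a subgroup of $\ker\pi$, so its order, which equals $D/d$, must divide $p$. In particular $d \mid D$ and, since $p$ is prime, only two possibilities remain: either $D = d$ or $D = pd$.

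Next I will rule out $D = d$, which is exactly where the Wieferich assumption enters. Indeed, if the order of $2$ modulo $p^2$ were $d$, then $2^d \equiv 1 \pmod{p^2}$; and since $d$ divides $p-1$ by Fermat's little theorem, raising to the $(p-1)/d$-th power yields
\begin{equation*}
2^{p-1} = (2^d)^{(p-1)/d} \equiv 1 \pmod{p^2},
\end{equation*}
contradicting the hypothesis~\eqref{lem_W_assumption}. Hence $D = pd$, which is precisely the claim $|\Gamma(2,p^2)| = p\,|\Gamma(2,p)|$. There is no genuine obstacle here: the lemma is the standard order-lifting dichotomy, and the non-Wieferich hypothesis is tailored exactly to exclude the degenerate case.
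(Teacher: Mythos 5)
Your proof is correct and takes essentially the same route as the paper's: both arguments rest on the reduction map $(\Z/p^2\Z)^\times\to(\Z/p\Z)^\times$ and its kernel of order $p$, with the non-Wieferich hypothesis serving exactly to exclude the degenerate case. The only difference is organizational — you first obtain the dichotomy $|\Gamma(2,p^2)|\in\{d,pd\}$ and rule out $d$ by contradiction, whereas the paper shows that $2^{p-1}$ generates the full order-$p$ kernel (hence $p\mid|\Gamma(2,p^2)|$) and combines this with $\gcd(d,p)=1$ and the upper bound $|\Gamma(2,p^2)|\le p\,|\Gamma(2,p)|$.
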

\begin{proof}
The multiplicative subgroup $H$ of $\ZZ/p^2\ZZ$ of all elements
$a\equiv 1\pmod p$ has order~$p$. Because of the small Fermat's theorem, $\Gamma(2^{p-1},p^2) < H$. Hence
Assumption~\eqref{lem_W_assumption} implies that
$\left|\Gamma(2^{p-1},p^2)\right|=p$. \hidden{ Note for every
residue $a\in\Gamma(2,p)$ we trivially have at least one residue
$\tilde{a}\in\Gamma(2,p^2)$ such that $\tilde{a}\equiv a\pmod{p}$.
Indeed, by definition, there exists a $t\in\Z_{\geq 0}$ such that
$a\equiv 2^t\pmod{p}$. Then we can simply take $\tilde{a}=2^r \mod
p^2$. Further, for every $b\in\Gamma(2^{p-1},p^2)$ we have
$\tilde{a}b\equiv a\pmod{p^2}$. As we have explained just above, in
case if $p$ is not Weiferich prime we have
$|\Gamma(2^{p-1},p^2)|=p$, so we conclude that for every
$a\in\Gamma(2,p)$ there exists precisely $p$ residues in the group
$\Gamma(2,p^2)$ congruent to $a$ modulo $p$. This readily gives
condition~\ref{condition_Weiferich} of Theorem{th4}. }

Note that, by definition, $\Gamma(2^{p-1},p^2)\subset\Gamma(2,p^2)$,
hence $p\mid\left|\Gamma(2,p^2)\right|$. At the same time, by a
reduction modulo $p$ the group $\Gamma(2,p^2)$ is mapped onto the
group $\Gamma(2,p)$, hence $\left|\Gamma(2,p)\right|$ divides
$\left|\Gamma(2,p^2)\right|$.

By the small Fermat's theorem  $\left|\Gamma(2,p)\right|\mid p-1$, so ${\rm gcd}\left(\left|\Gamma(2,p)\right|,p\right)=1$. We readily infer that $ p\left|\Gamma(2,p)\right|$ divides $\left|\Gamma(2,p^2)\right|$ and so $\left|\Gamma(2,p^2)\right|\geq p\left|\Gamma(2,p)\right|$.

On the other hand, the reduction modulo $p$ sends $\Gamma(2,p^2)$ onto $\Gamma(2,p)$ and under this map each element in $\Gamma(2,p)$ has at most $p$ preimages.  We conclude that $\left|\Gamma(2,p^2)\right|= p\left|\Gamma(2,p)\right|$ and this completes the proof of the lemma.
\end{proof}

The big part of the proof of Theorem~\ref{th4} is the same as for
Theorem~BZ2. Therefore it will be just briefly outlined here and we
refer the reader to~\cite{badziahin_zorin_2014} for the details. In
this paper we mainly focus on the part of the proof which is
specific to Theorem~\ref{th4}.

% I THINK THAT A SLIGHTLY MORE GENERAL LEMMA MAY HOLD TRUE AS WELL:
%\hidden{
%\begin{lemma}
%Let $a\in\Z\setminus\{0\}$, let $m\geq 2$ be an integer and let $p$ be a prime number.
%Then,
%$$
%|\Gamma(a,p^{m+2})|
%= p^m|\Gamma(a,p^2)|.
%$$
%\end{lemma}
%}
%THIS LEMMA HOLDS TRUE FOR FIRST 1000 PRIMES. HOWEVER AT THE MOMENT I NEED ASSUMPTION FROM THE SHOWN LEMMA TO COMPLETE THE PROOF;

In the proof of Theorem~\ref{th4} we will need the following lemma. %which, which is in fact equivalent to~\cite[Lemma~4.4]{badziahin_zorin_2014}, and of course it also follows from~\cite[page 102]{NZM1991}.
\begin{lemma} \label{lemma_Gamma}
Let $a\in\Z\setminus\{0\}$, and let $p$ be an odd prime number. If
\begin{equation}  \label{lemma_Gamma_assumption}
|\Gamma(a,p^2)| = p\,|\Gamma(a,p)|,
\end{equation}
then for each $m\in\NN$,
$$
|\Gamma(a,p^{m+1})|
= p^m|\Gamma(a,p)|.
$$
\end{lemma}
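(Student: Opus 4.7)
The plan is to proceed by induction on $m$, the base case $m=1$ being exactly the hypothesis~\eqref{lemma_Gamma_assumption}. So suppose that $\left|\Gamma(a,p^m)\right|=p^{m-1}\left|\Gamma(a,p)\right|$ for some $m\geq 1$; I will show that $\left|\Gamma(a,p^{m+1})\right|=p^m\left|\Gamma(a,p)\right|$.

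The natural ingredient is the reduction map $\pi\colon (\Z/p^{m+1}\Z)^\times \to (\Z/p^m\Z)^\times$, which is surjective with kernel of order $p$. Since $\pi$ sends $\Gamma(a,p^{m+1})$ onto $\Gamma(a,p^m)$, the order $\left|\Gamma(a,p^{m+1})\right|$ is a multiple of $\left|\Gamma(a,p^m)\right|$ that is at most $p\,\left|\Gamma(a,p^m)\right|$. Hence only two options are possible, namely $\left|\Gamma(a,p^{m+1})\right|=\left|\Gamma(a,p^m)\right|$ or $\left|\Gamma(a,p^{m+1})\right|=p\left|\Gamma(a,p^m)\right|$, and the task is to exclude the former.

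To exclude it, I will use the standard lifting-the-exponent step for odd primes. Set $N:=\left|\Gamma(a,p)\right|$. The assumption~\eqref{lemma_Gamma_assumption} means that $a^{N}\equiv 1\pmod p$ but $a^{N}\not\equiv 1\pmod{p^2}$, so we can write $a^{N}=1+k_1 p$ with $p\nmid k_1$. I claim that, more generally, for every $j\geq 1$ one has
\begin{equation*}
a^{Np^{j-1}} = 1+k_j p^j, \qquad p\nmid k_j .
\end{equation*}
This is shown by induction on $j$: raising $1+k_j p^j$ to the $p$-th power and expanding via the binomial theorem, the term $\binom{p}{2}(k_j p^j)^2$ and all subsequent terms are divisible by $p^{j+2}$ because $p$ is odd (this is the only place where oddness of $p$ enters, as usual in such lifting arguments). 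Hence $a^{Np^j}\equiv 1+k_j p^{j+1}\pmod{p^{j+2}}$, and $k_{j+1}\equiv k_j\pmod p$ is coprime to $p$.

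Applying this claim with $j=m$ yields $a^{Np^{m-1}}=1+k_m p^m$ with $p\nmid k_m$, so $a^{Np^{m-1}}\not\equiv 1\pmod{p^{m+1}}$. By the inductive hypothesis $Np^{m-1}=\left|\Gamma(a,p^m)\right|$, therefore the order of $a$ modulo $p^{m+1}$ strictly exceeds $\left|\Gamma(a,p^m)\right|$, ruling out the first of the two options above. We conclude $\left|\Gamma(a,p^{m+1})\right|=p\left|\Gamma(a,p^m)\right|=p^{m}\left|\Gamma(a,p)\right|$, completing the induction. The only delicate point is the lifting-the-exponent computation; everything else is bookkeeping via the reduction map.
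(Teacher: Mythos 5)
Your proof is correct, but the core of your argument is genuinely different from the one in the paper. Both proofs share the same skeleton: reduce to the single-step claim $|\Gamma(a,p^{m+1})|=p\,|\Gamma(a,p^m)|$, and observe via the reduction map $(\Z/p^{m+1}\Z)^\times\to(\Z/p^m\Z)^\times$ that the only alternative is $|\Gamma(a,p^{m+1})|=|\Gamma(a,p^m)|$. The difference is in how that alternative is excluded. The paper writes $a^{|\Gamma(a,p^m)|}\equiv 1+tp^m \pmod{p^{m+1}}$ and splits into cases according to whether $p$ divides $t$; in the stagnation case $p\mid t$ it derives a contradiction by a counting argument: the polynomial $f(x)=x^{|\Gamma(a,p^m)|}-1$ has $f'\equiv 0\pmod p$ (since $p$ divides $|\Gamma(a,p^m)|$ by the hypothesis), so a Hensel-type perturbation $u\mapsto u+\theta p^m$ produces more roots of $f$ modulo $p^{m+1}$ than the group $\Gamma(a,p^{m+1})$ can contain. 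You instead strengthen the inductive claim to track the exact $p$-adic valuation, proving $a^{Np^{j-1}}=1+k_jp^j$ with $p\nmid k_j$ by the standard lifting-the-exponent computation for odd $p$; this shows directly that the paper's $t$ is never divisible by $p$, so the problematic case is vacuous and no counting or Hensel argument is needed. Your route is the more classical one and arguably shorter and more transparent; the paper's route gets by with less quantitative information (only that $p$ divides $|\Gamma(a,p^m)|$) at the cost of a more elaborate contradiction. Both correctly use the hypothesis only to seed the induction ($a^{N}\not\equiv 1\pmod{p^2}$, respectively $p\mid|\Gamma(a,p^m)|$), and both implicitly assume $p\nmid a$, which is forced by the statement anyway.
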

\proof
%For $m=1$ the conclusion coincides with the assumption, so it is enough to prove the lemma for $m\geq 2$.
We will show that for any $m\geq 2$,
\begin{equation} \label{lemma_Gamma_intermediate_conclusion}
|\Gamma(a,p^{m+1})|
= p|\Gamma(a,p^m)|,
\end{equation}
then the lemma readily follows by induction.

Fix $m\in\NN$. To simplify the notation, we write
$$
h:=|\Gamma(a,p^{m+1})|.
$$
Then, $a^h\equiv 1 \; \mod p^{m+1}$. By reducing modulo $p^m$ we get
$a^h\equiv 1 \; \mod p^{m}$, hence
\begin{equation} \label{factoring_h}
h=s\Gamma(a,p^m)
\end{equation}
for some $s\in\NN$. At the same time, we have
$$
a^{|\Gamma(a,p^m)|}\equiv 1+t p^{m} \; \mod p^{m+1}.
$$
%where $t$ is coprime with $p$.
By raising both sides of this congruence to the power $s$ and applying~\eqref{factoring_h}, we find
\begin{equation}  \label{lemma_Gamma_congruence_one}
1\equiv (1+tp^{m})^s \; \mod p^{m+1}.
\end{equation}
By expanding brackets on the right hand side of
~\eqref{lemma_Gamma_congruence_one}, we find
$$
1\equiv 1+stp^{m} \; \mod p^{m+1},
$$
hence $p$ divides either $s$ or $t$ (or both).

If $p$ divides $s$ then~\eqref{factoring_h} implies $h\geq p
|\Gamma(a,p^m)|$. On the other hand the reduction modulo $p^m$ sends
$\Gamma(a, p^{m+1})$ onto $\Gamma(a, p^{m})$ and under this map each
element in $\Gamma(a, p^{m})$ has at most $p$ preimages. Therefore
%$h\leq p |\Gamma(a,p^m)|$. Thus we conclude for this case
$$
h= p |\Gamma(a,p^m)|,
$$
and this is precisely~\eqref{lemma_Gamma_intermediate_conclusion}.

Now suppose that $p$ divides $t$. In this case we have
\begin{equation}  \label{lemma_Gamma_coincide}
h=\Gamma(a,p^{m+1})=\Gamma(a,p^m).
\end{equation}
Consider the polynomial congruence
\begin{eqnarray}  \label{lemma_Gamma_lemma_Hensel}
%f(x)&\equiv& 0 \; \mod p^{m}, \label{lemma_Gamma_lemma_Hensel_pm}\\
f(x)&\equiv& 0 \; \mod p^{m+1}, \label{lemma_Gamma_lemma_Hensel}
\end{eqnarray}
where $f(x)=x^{\Gamma(a,p^m)}-1$.

Because of~\eqref{lemma_Gamma_coincide} the solutions to the congruence~\eqref{lemma_Gamma_lemma_Hensel} are precisely the elements of $\Gamma(a,p^{m+1})$ and these solutions are congruent modulo $p^{m+1}$ to
\begin{equation}  \label{lemma_Gamma_list}
1, a, \dots, a^{\Gamma(a, p^m)-1}.
\end{equation}
Note that, as the representatives of $\Gamma(a,p^m)$, the elements of the list~\eqref{lemma_Gamma_list} are pairwise distinct modulo $p^m$. %and this list gives precisely all the solutions to the congruence~\eqref{lemma_Gamma_lemma_Hensel_pm}.

At the same time, we easily calculate
$$
f'(x)=\Gamma(a,p^m) x^{\Gamma(a,p^m)-1}
$$
The assumption~\eqref{lemma_Gamma_assumption} readily implies that $\Gamma(a,p^m)$ is divisible by $p$ for any $m\geq 2$, so for any integer value $x$ we have
$$
f'(x)\equiv 0 \; \mod p.
$$
Then Hensel's lemma implies that for any integer $u$ that
verifies~\eqref{lemma_Gamma_lemma_Hensel} and any
$\theta=0,\dots,p-1$ the integer $u+\theta p^m$ is also a solution
to~\eqref{lemma_Gamma_lemma_Hensel}.

For any $\theta=0,\dots,p-1$ the number $a+\theta p^m$ is not
congruent modulo $p^{m+1}$ to any element of the
list~\eqref{lemma_Gamma_list}, because all representatives there are
distinct modulo $p^m$. However it contradicts the fact that all the
residues modulo $p^{m+1}$ verifying~\eqref{lemma_Gamma_lemma_Hensel}
are given in~\eqref{lemma_Gamma_list}. This contradiction proves the
lemma.
\endproof

%\hidden{
%In the proof we will need the following lemma. %which, which is in fact equivalent to~\cite[Lemma~4.4]{badziahin_zorin_2014}, and of course it also follows from~\cite[page 102]{NZM1991}.
%\begin{lemma}
%Let $a\in\Z\setminus\{0\}$, let $m\geq 2$ be an integer and let $p$ be a prime number. %If $|\Gamma(a,p^2)| = p\,|\Gamma(a,p)|$, then for each $m\in\NN$,
%Then,
%$$
%|\Gamma(a,p^{m+2})|
%= p^m|\Gamma(a,p^2)|.
%$$
%\end{lemma}
%\proof
%To prove the lemma it is enough to verify that for any integer $\theta\in\N$, if there exists an $n$ such that
%}

\endproof

\hidden{
%We need the classical(?) lemma.

In the proof we will need the following lemma. %which, which is in fact equivalent to~\cite[Lemma~4.4]{badziahin_zorin_2014}, and of course it also follows from~\cite[page 102]{NZM1991}.
\begin{lemma}
Let $a\in\Z\setminus\{0\}$, $m\in\N$ and let $p$ be a prime number. %If $|\Gamma(a,p^2)| = p\,|\Gamma(a,p)|$, then for each $m\in\NN$,
Then,
$$
|\Gamma(a,p^{m+1})|
= p^m|\Gamma(a,p)|.
$$
\end{lemma}
\proof
We are going to prove
\begin{equation*}
|\Gamma(a,p^{m+1})|
= p|\Gamma(a,p^m)|,
\end{equation*}
then the lemma readily follows by recurrence.

To simplify the notation, let us use the notation
$$
h=|\Gamma(a,p^{m+1})|.
$$
Then, $a^h\equiv 1 \; \mod p^{m+1}$. By reducing modulo $p^m$, we readily infer $a^h\equiv 1 \; \mod p^{m}$, hence
\begin{equation} \label{factoring_h}
h=s\Gamma(a,p^m).
\end{equation}
At the same time, we have
$$
a^{|\Gamma(a,p^m)|}\equiv 1+t p^{m} \; \mod p^{m+1},
$$
%where $t$ is coprime with $p$.
By raising both sides of this congruence to the power $s$ and applying~\eqref{factoring_h}, we find
$$
(1+tp^{m})^y
$$
\endproof
\hidden{
%\begin{proof}
\proof
Let $g\in\Z$ be a primitive root modulo $p^2$. By~\cite[Lemma~4.4]{badziahin_zorin_2014} the integer $g$ is also a primitive root modulo $p^m$ for any $m\in\N$. Let $n\in\N$ be the smallest positive integer that verifies
$$
g^n\equiv a \; \mod p.
$$
Naturally, $n=(p-1)/|\Gamma(a,p)|$.
\endproof
%\end{proof}
}
}

\hidden{
In particular, this lemma implies that, if its hypothesis are satisfied and for any $s\in\NN$, the
set
$$
\{2^n \; \mod p^m\mid n\in\NN, \; 2^n\equiv 2^s\pmod p\}
$$
coincides
with the set of residues modulo $p^m$ congruent to $2^s$ modulo $p$.
}

\proof[Proof of Theorem~\ref{th4}] Firstly, since $f_2(a)$ and
$g_2(a)$ are rationally dependent, to prove the theorem it is enough to show that
$g_2(a)$ is not badly approximable.

Secondly, for each convergent $p(x)/q(x)$ of $g_2(x)$ we provide the
series of convergents $\tilde{p}_n(x)/\tilde{q}_n(x)$ of $g_2(x)$
such that
\begin{equation}\label{eq1_th4}
\tilde{p}_n(x) = \prod_{t=0}^{n-1} (x^{2^t}-1) p(x^{2^n});\quad
\tilde{q}_n(x) = q(x^{2^n}).
\end{equation}
By multiplying both $p(x)$ and $q(x)$ by some integer constant, we
can always guarantee that $p(x)$, $q(x)$ and in turn
$\tilde{p}_n(x), \tilde{q}_n(x)$ are all in $\ZZ[x]$. Moreover
(see~\cite[Lemma 4.3]{badziahin_zorin_2014}), values
$\tilde{p}_n(a)/\tilde{q}_n(a)$ provide very good (but probably not
the best) approximations to $g_2(a)$. Namely, there exists a
constant $C$ which does not depend on $n$, such that
$$
\left|g_2(a) - \frac{\tilde{p}_n(a)}{\tilde{q}_n(a)}\right| \le
\frac{C}{(\tilde{q}_n(a))^2}.
$$

Hence, to show that $g_2(a)$ is not badly approximable, it is
sufficient to find the initial convergent $p(x)/q(x)$ and $n\in\NN$
such that $\tilde{p}_n(a)$ and $\tilde{q}_n(a)$ have an arbitrarily
large common integer factor. By~\eqref{eq1_th4} and the first
condition of the theorem we already have that $p^{n-n_0}\mid
\tilde{p}_n(a)$. So we only need to show that the sequence
$\tilde{q}_n(a)$, $n\in\N$, contains elements which are divisible by
arbitrarily large powers of $p$.

For the initial convergent we choose $p_t(x)/q_t(x)$. The aim now is
to show that for each $m\in \NN$ one can find $n\in \NN$ such that
$q_t(a^{2^n})$ is divisible by $p^m$. Conditions~3 and~4 and
Hensel's lemma imply that the equation $q_t(x)=0$ has a solution
$x\in\ZZ_p$ such that
\begin{equation}\label{a_2n_zero_p2}
x\equiv a^{2^{n_0}}\pmod{p^2}.
\end{equation}
In
particular, $x\equiv 1\pmod p$. We want to show that for each
$m\in\NN$ there exists $n\in\NN$ such that
\begin{equation} \label{th4_aim}
a^{2^n}\equiv
x\pmod{p^m},
\end{equation}
which will immediately imply that $p^m\mid
\tilde{q}_n(a)$.

For every $m\in \NN$ the multiplicative group $\RRR_{p^m}^*$ of
residues modulo $p^m$ has the order $(p-1)p^{m-1}$. As the element
$a^{2^{n_0}}$ is congruent to one modulo $p$, it lies in the kernel
of the canonical projection $\RRR_{p^m}^*\to \RRR_p^*$. The
multiplicative group $\RRR^*_{p}$ of residues modulo $p$ has the
order $p-1$, so the residue $a^{2^{n_0}}$ has the order $p^{l}$ in
$\RRR^*_{p^m}$, for some $l\leq m-1$. If the value $l$ is strictly
smaller than $m-1$, then we necessarily have $a^{2^{n_0}}\equiv
1\;\mod p^2$, which contradicts the first condition of the theorem, hence the
multiplicative order of $a^{2^{n_0}}$ modulo $p^{m}$ is exactly
$p^{m-1}$ and thus the set of residues $\{a^{2^{n_0}\cdot s}\mod
p^m\!: s\in \NN, \gcd(s,p)=1\}$ coincides with the set of residues
modulo $p^m$ congruent to 1 modulo $p$ but not congruent to 1 modulo
$p^2$. So, there is an $s\in\NN$ such that
\begin{equation} \label{congr_x}
a^{2^{n_0}\cdot s}\equiv
x \;\mod p^{m}
\end{equation}
and $s\not\equiv 0\,\mod p$. Moreover, because of the congruence~\eqref{a_2n_zero_p2} we have
\begin{equation} \label{s_equiv_1_mod_p}
s\equiv 1\;\mod p
\end{equation}
The congruence~\eqref{s_equiv_1_mod_p} implies that the residue of $\theta=2^{n_0}s$ modulo $p$ lies in $\Gamma(2,p)$.

Because of Condition~\ref{condition_Weiferich} we can apply
Lemma~\ref{lemma_Gamma}. It implies that for any $m\in\N$ the group
$\Gamma(2,p^m)$ coincides with the full preimage of $\Gamma(2,p)$
under the canonical projection $\RRR_{p^m}^*\to \RRR_p^*$. In
particular,
%
%for any $\theta\in\NN$, the
%set
%$$
%\{2^n \; \mod p^m\mid n\in\NN, \; 2^n\equiv 2^{\theta}\pmod p\}
%$$
%coincides
%with the whole set of residues modulo $p^m$ congruent to $2^{\theta}$ modulo $p$.
%
%In particular, for $\theta=2^{n_0}s$ we have that
there exists $t_m\in\N$ such that
$$
2^{t_m}\equiv 2^{n_0}s \; \mod p^{m-1}.
$$
For this $t_m$, we have
\begin{equation} \label{congr_2t}
2^{2^{t_m}}\equiv 2^{2^{n_0}s} \; \mod p^m
\end{equation}
(recall that $2^{2^{n_0}s}$ has order $p^{m-1}$ in $(\ZZ/p^m\ZZ)^*$,
because $2^{2^{n_0}}$ has order $p^{m-1}$ and $s$ is coprime to
$p$). Taking~\eqref{congr_x} and~\eqref{congr_2t} together we
conclude
$$
2^{2^{t_m}}\equiv x \; \mod p^m,
$$
which is precisely~\eqref{th4_aim}. This finishes the proof.

\hidden{
As $a^{2^{n_0}}$ has order $p^{m-1}$ modulo $p^m$ and $2^s$ may take
all possible residues modulo $p^{m-1}$ such that $2^s\equiv 1\pmod
p$, the set of residues $\{a^{2^{n_0}\cdot 2^s}\;\mod p^m\mid
s\in\NN,\, 2^s\equiv 1\pmod p\}$ coincides with the set of residues
$\{a^{2^{n_0}\cdot s}\;\mod p^m\mid s\in\NN,\, a^{2^{n_0}\cdot
s}\equiv a^{2^{n_0}}\pmod{p^2}\}$. In particular, there exists $s_1$
such that $a^{2^{n_0+s_1}}\equiv x\pmod {p^m}$.}
\endproof

Theorem~\ref{th4} provides an algorithm for showing that $f_n(a)$ is
not badly approximable for a given $a$. We firstly find $p$ such
that Conditions~1 and~2 of the theorem are satisfied. Then we try to
find the denominator of a convergent $q_t(x)$ which satisfies
Conditions~3 and~4.

Let's use Theorem~\ref{th4} for some small prime values $p$. For
$p=3$ Condition~1 is satisfied for all $a$ except $a\equiv 0\pmod 3$
and $a\equiv \pm 1\pmod 9$. Condition~2 can be easily checked. With
help of Theorem~BZ1 we find
$$
q_9(x) = (x+1)(x^8-x^6+x^2+2)
$$
which satisfies $q_9(7) \equiv 0 \pmod 9$ and $q_9'(1)\not\equiv
0\pmod 3$. It is not difficult to show that for $a\not\equiv
0,3,6,\pm1\pmod 9$ one can always find $n_0$ such that
$a^{2^{n_0}}\equiv 7\pmod 9$. Therefore Theorem~\ref{th4} states
that $f_2(a)$ is not badly approximable for all $a\not \equiv
0,3,6,\pm 1\pmod 9$.

{\bf Remark.} In~\cite{badziahin_zorin_2014} the authors were too
brave to state that $f_2(a)$ is not badly approximable for all $a$
coprime with $3$. Unfortunately they forgot about the case
$a\equiv\pm 1\pmod 9$ which violates the first condition of
Theorem~BZ2.

Using $p=5$ and $q_{11}(x)$ we can show that $f(a)$ is not badly
approximable for all $a\in\NN$ such that $a\not\equiv 0\pmod 5$ and
$a\not\equiv \pm1, \pm7\pmod{25}$.

We conducted this procedure (using a small computer program)
for some other small primes $p$. The results are presented in the
following table, where the column $x\;\mod p^2$ specifies the solution to the congruence $q_t(x)\equiv 1 \pmod{p^2}$.

\medskip \begin{tabular}{|c|c|c|p{10cm}|} \hline $p$& $q_t(x)$&
$x\;\mod p^2$& values $a$ which pass
Conditions~1 and~3\\
\hline 3&$q_9(x)$&7&$a\equiv \pm 2,\pm4\pmod{9}$\\
\hline 5&$q_{11}(x)$&11&$a\not \equiv 0\pmod{5}, a\not\equiv \pm1,\pm7\pmod{25}$\\
\hline 7&$q_{41}(x)$&15&$a\equiv \pm1\pmod{7}, a\not\equiv \pm1\pmod{49}$\\
\cline{2-3} &$q_{187}(x)$&43& \\
\hline 11&$q_{43}(x)$&34&$a\equiv \pm1\pmod{11}, a\not\equiv \pm1\pmod{11^2}$\\
\hline 13&$q_{33}(x)$&14&$a \equiv \pm1,\pm 5\pmod{13}, a\not\equiv \pm1,\pm70\pmod{13^2}$\\
\hline 17&$q_{13}(x)$&69&$a^{16} \equiv 1 \pmod{17}$, $a^{16}\not\equiv 1 \pmod{17^2}$\\
\cline{2-3} &$q_{157}(x)$&86& \\
\hline 19&$q_{19}(x)$&210&$a\equiv \pm1\pmod{19}, a\not\equiv \pm1\pmod{19^2}$\\
\hline 23&$q_{79}(x)$&277&$a \equiv \pm1\pmod{23}, a\not\equiv \pm1\pmod{23^2}$\\
\cline{2-3} &$q_{187}(x)$&254& \\
%\cline{2-3} &$q_{153}(x)$&24& \\
\hline 29&$q_{35}(x)$&117&$a\equiv \pm1, \pm 12\pmod{29}, a\not\equiv \pm1, \pm 41\pmod{29^2}$\\
\hline 31&$q_{29}(x)$&156&$a\!\equiv\! \pm 156, \pm 280, \pm 311, \pm 340, \pm 402 \pmod{31^2}$\\
\hline 37&$q_{21}(x)$&408&$a\equiv \pm 1, \pm 6\pmod{37}, a\not\equiv \pm1, \pm 117 \pmod{37^2}$\\
%\hline 41&$q_{243}(x)$&1559&$a^{8} \equiv 124, 247, 288, 452, 493, 534, 575, 616, 698, 780, 903, 985$, $1067,
%1108, 1149, 1190, 1231, 1395, 1436, 1559 \pmod{41^2}$\\
%\hline 43&$q_{195}(x)$&689&$a^{8} \equiv 2, 44, 85, 87, 171, 173, 343, 345, 472, 474, 687, 689, 902$, $904, 945,
%947, 1160, 1162, 1375, 1377, 1504, 1506, 1676, 1678$, $1762, 1764, \
%1805, 1807 \pmod{43^2}$\\
%\hline 47&$q_{115}(x)$&894&$a \equiv 46, 48, 93, 95, 140, 142, 187, 189, 236, 281, 283, 328, 330$, $375,
%377, 422, 424, 471, 518, 563, 565, 612, 657, 659, 706, 751$, $753, 798,
%800, 845, 847, 894, 941, 986, 988, 1035, 1082, 1127$, $1129, 1174,
%1176, 1223, 1268, 1270, 1315, 1317, 1364, 1411$, $1458, 1503, 1505,
%1552, 1597, 1599, 1646, 1691, 1693, 1738$, $1740, 1787, 1834, 1881,
%1928, 1973, 1975, 2022, 2069, 2116$, $2163 \pmod{47^2}$\\
%\cline{2-3} &$q_{253}(x)$&988& \\
%\hline 53&$q_{243}(x)$&1591&$a^{4} \equiv 1 \pmod{53}$, $a^{4}\not\equiv 1 \pmod{53^2}$\\
\hline
\end{tabular}

\medskip

%\hidden{
\medskip There are only two values of $a$ below 100 which are not covered by
this table: $a = 26$ and $a=82$.

For $a=26$ we can take $p=677 = a^2+1$. Then Conditions~1 and~2 are
satisfied. Further, Conditions~3 and~4 are satisfied for
$q_{319}(x)$ which has root $x\equiv 291111\equiv
26^{2^{204}}\pmod{677^2}$ in $\QQ_{677}$ and so Theorem~\ref{th4}
implies that $f_2(26)$ is not badly approximable.
%We believe that for each $a$ we can carefully choose $p$ and $q_t(x)$ such that Conditions~1 --~4 of Theorem~\ref{th4} are satisfied.
%}

%\medskip

For $a=82$ we can take $p=83=a+1$. Then Conditions~1 and~2 are
satisfied. Further, Conditions~3 and~4 are satisfied for $q_{91}(x)$
which has root $x\equiv 5479 \equiv 82^{2^{56}}\pmod{83^2}$ in
$\ZZ_{83}$ and so Theorem~\ref{th4} implies that $f_2(82)$ is not
badly approximable.

We believe that for each $a$ we can carefully choose $p$ and
$q_t(x)$ such that Conditions~1 --~4 of Theorem~\ref{th4} are
satisfied.

\hidden{
\begin{remark}
In the table above, we have quite a long list of values of $a$ that pass Conditions~1 and~3 for $p=17$. There are 128 values of $a$ in this list. This long list actually gives precisely the list of $a$ such that $a^{16}$ is congruent modulo $17^2$ to one of the elements of the set $\{18, 35, 69, 137, 154, 222, 256, 273\}$.
\end{remark}
}

\subsection{The case $d=3$} \label{subsection_deq3}

In the case $d=3$ we can use methods very similar to those for the
case $d=2$. However not every convergent $p(x)/q(x)$ to $g_3(x)$
produces a nice infinite sequence of convergents to $g_3(x)$. On the
other hand some of them do, as it is shown in Lemma~\ref{lem7}
below.

\begin{lemma}\label{lem7}
Let $p_t(x)/q_t(x)$ be the sequence of the convergents of $g_3(x)$
and $d_t$ be the least common multiple of the denominators of all
rational coefficients of $p_t(x)$ and $q_t(x)$. Then for each even
$t$ the rational functions $\tilde{p}_{t,n}(x)/\tilde{q}_{t,n}(x)$
where
\begin{equation}\label{eq_lem7}
\tilde{p}_{t,n}(x):= \prod_{k=0}^{n-1} (x^{3^{k+1}}(x^{3^k}-1))
p_t(x^{3^n})\quad\mbox{and}\quad \tilde{q}_{t,n}(x):= q_t(x^{3^n}),
\end{equation}
are all convergents of $g_3(x)$. Moreover for each positive integer
$a>1$ there exists a constant $C$ independent of $n$ such that
$$
\left|g_3(a) -
\frac{d_t\tilde{p}_{t,n}(a)}{d_t\tilde{q}_{t,n}(a)}\right| \le
\frac{C}{(d_t\tilde{q}_{t,n}(a))^2}.
$$
\end{lemma}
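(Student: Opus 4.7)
The plan is to identify $\tilde p_{t,n}/\tilde q_{t,n}$ with a genuine convergent of $g_3(x)$ by iterating Lemma~\ref{lem5}, and then to translate the resulting functional quality of approximation into a numerical bound upon evaluation at $x=a$. First, I would prove by induction on $n$ that $q_{3^n t, g_3}(x) = \tilde q_{t,n}(x)$ and $p_{3^n t, g_3}(x) = \tilde p_{t,n}(x)$. The case $n=0$ is trivial. For the inductive step, since $t$ is even, so is $3^n t$; thus Lemma~\ref{lem5} applies with $2k=3^n t$, yielding $q_{3^{n+1}t}(x) = q_{3^n t}(x^3)$ and $p_{3^{n+1}t}(x) = x^3(x-1)p_{3^n t}(x^3)$. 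Combined with the induction hypothesis this gives the formulas at level $n+1$. The evenness of $t$ is precisely what allows Lemma~\ref{lem5} to be applied iteratively, and this proves the first assertion of the lemma.

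Next, I would extract the functional quality of approximation. By Theorem~\ref{th2}, since $3^n t$ is even, the partial quotient $a_{3^n t+1, g_3}$ has degree $d-1 = 2$, so $\|g_3(x) - \tilde p_{t,n}(x)/\tilde q_{t,n}(x)\| = -2\|\tilde q_{t,n}(x)\| - 2$. To control the evaluation at $x=a$ uniformly in $n$, I would use the identity obtained by iterating~\eqref{eq3_lem3},
$$
g_3(x) - \frac{\tilde p_{t,n}(x)}{\tilde q_{t,n}(x)} = \prod_{k=0}^{n-1} x^{3^{k+1}}(x^{3^k}-1)\cdot \left[g_3(x^{3^n}) - \frac{p_t(x^{3^n})}{q_t(x^{3^n})}\right],
$$
whose bracket is the substitution $y=x^{3^n}$ into the fixed Laurent series $g_3(y) - p_t(y)/q_t(y)$ whose coefficients depend only on $t$.

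For the numerical step, fix $a\geq 2$. Convergence of the tail of $g_3(y) - p_t(y)/q_t(y)$ in the region $|y|\geq a$ provides an $n$-independent constant $C_t$ with $|g_3(y) - p_t(y)/q_t(y)|\leq C_t |y|^{-2\|q_t\|-2}$. Substituting $y=a^{3^n}$ and multiplying by the product factor, whose absolute value at $a$ is at most $a^{2(3^n-1)}$, yields $|g_3(a) - \tilde p_{t,n}(a)/\tilde q_{t,n}(a)|\leq C_t\, a^{-2\cdot 3^n\|q_t\|-2}$. Since $\tilde q_{t,n}(a)=q_t(a^{3^n})$ satisfies $|\tilde q_{t,n}(a)|\leq C_t'\,a^{3^n\|q_t\|}$ for some $n$-independent $C_t'$, absorbing the fixed factors $C_t$, $(C_t')^2$, $d_t^2$ and $a^{-2}$ into a single constant $C$ yields the claimed inequality. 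The main technical subtlety is the uniformity in $n$ of all the constants, which reduces to the observation that $g_3(y) - p_t(y)/q_t(y)$ is a Laurent series in $y^{-1}$ with a coefficient sequence depending only on $t$, so that substitutions $y = a^{3^n}$ produce exponentially decaying bounds whose prefactor does not depend on $n$.
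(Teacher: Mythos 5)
Your proposal is correct and follows essentially the same route as the paper: the first assertion by iterating Lemma~\ref{lem5} (the paper does this without writing out the induction), and the second by the same iterated functional identity, the same uniform tail bound on the fixed Laurent series $g_3(y)-p_t(y)/q_t(y)$, and the same degree count $\prod_{k=0}^{n-1}|a^{3^{k+1}}(a^{3^k}-1)|\le a^{2(3^n-1)}$ before absorbing $d_t^2$ and the remaining fixed factors into $C$.
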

In other words Lemma~\ref{lem7} is an analogue of Lemma~4.3
from~\cite{badziahin_zorin_2014} and it says that
$d_t\tilde{p}_{t,n}(a)/d_t\tilde{q}_{t,n}(a)$ is almost the best
rational approximation of $g_3(a)$.

\proof The first statement of the lemma follows from the successive
application of Lemma~\ref{lem5}. We proceed with the proof of the second
statement.

Denote $G(x):= g_3(x) - p_t(x) / q_t(x)$, an infinite series in
$x^{-1}$. Since $t=2t_0$ is even, Theorem~\ref{th2} implies that
$G(x)$ starts with the term $c_1x^{-6t_0-2}$ where $c_1$ is some
integer constant. Take a compact disc $\DD\subset\{x\in \CC\;:\;
|x|>1\}$ with the center at infinity inside the set of convergence
of $G(x)$ which contains the value $a$. For the sake of concretness we can take $\DD=\{x\in \CC\;:\;
|x|>\frac{1+a}{2}\}$. Then there exists a constant
$c$ such that for each $x\in \DD$, $G(x)\le cx^{-6t_0-2}$. Consider
$|G(x^{3^n})|\prod_{k=0}^{n-1}(x^{3^{k+1}}(x^{3^k}-1))$ where
$n\in\NN$. Surely $x^{3^n}$ also belongs to $\DD$ therefore, taking into account the functional relations~\eqref{eq3_lem3} for
$g_3(x)$, we find
\begin{equation}\label{eq_G}
|G(x^{3^n})|\prod_{k=0}^{n-1}(x^{3^{k+1}}(x^{3^k}-1)) = \left|g_3(x)
-\frac{p_t(x^{3^{n+1}})
\prod_{k=0}^{n-1}(x^{3^{k+1}}(x^{3^k}-1))}{q_t(x^{3^{n+1}})}\right|
\end{equation}$$
\le \frac{
c\prod_{k=0}^{n-1}(x^{3^{k+1}}(x^{3^k}-1))}{x^{3^n(6t_0+2)}}
$$
By noticing that $x^{3^k}-1\le x^{3^k}$ and comparing the powers of
$x$ at the enumerator and the denominator we get that the right hand
side of this inequality is bounded above by
$$
\frac{ c\prod_{k=0}^{n-1}(x^{3^{k+1}}(x^{3^k}-1))}{x^{3^n(6t_0+2)}}
\le \frac{c}{x^{2\cdot 3^{n+1}t_0+2}}.
$$
By substituting~\eqref{eq_lem7} into the inequality~\eqref{eq_G} we
get
$$
\left|g_3(x) - \frac{\tilde{p}_{t,n}(x)}{\tilde{q}_{t,n}(x)}\right|
\le \frac{c}{x^{2\cdot 3^{n+1}t_0+2}}.
$$

The degree of the polynomial $q_t(x)$ is $3t_0$. Thus there exists
an absolute constant $c_2$ such that for each $x\in\DD$, $|q_t(x)|
\le c_2x^{3t_0}$ which in turn implies that $|\tilde{q}_{t,n}(x)| =
|q_t(x^{3^n})|\le c_2x^{3^{n+1}t_0}$. Therefore
$$
\left|g_3(x) -
\frac{d_t\tilde{p}_{t,n}(x)}{d_t\tilde{q}_{t,n}(x)}\right|\le
\frac{c\cdot c_2^2 x^{-2}d_t^2}{(d_t\tilde{q}_{t,n}(x))^2}
$$
This implies the second statement of the lemma with $C = c\cdot
c_2^2 a^{-2}d_t^2$.
\endproof

Lemma~\ref{lem7} suggests an analogous method for checking whether
$f_3(a)$ is badly approximable as in Theorem~\ref{th4}. As soon as
we have $p\mid a^{3^{n_0}}-1$ for some prime $p$, we immediately
have from the formulae~\eqref{eq_lem7} that $p^{n-n_0} \mid
\tilde{p}_{t,n}(a)$ for all integer $n\ge n_0$ and all even $t$.
Then if we are able to show that for some fixed even $t$ the
sequence $\tilde{q}_{t,n}(a)$ contains elements which are divisible
by an arbitrarily large power of $p$ then $g_3(a)$ and in turn
$f_3(a)$ are not badly approximable. We conclude this idea in the
following theorem. Since its proof mostly repeats the steps of
Theorem~\ref{th4} we leave it for an enthusiastic reader.

\begin{theorem} \label{theo_n3}
Let, as before, $p_t(x)/q_t(x)$ be the convergents of the series
$g_3(x)$. Assume that there exit positive integers $n_0,t,p$ such
that
\begin{enumerate}
\item $p\ge 5$ is a prime number and $p\,||\,a^{3^{n_0}}-1$;
\item \label{theo_n3_condition_Weiferich} $|\Gamma(3,p^2)| = p|\Gamma(3,p)|$; %where $\Gamma_k$ is a
%multiplicative subgroup of $\ZZ/p^k\ZZ$ generated by $3$.
\item $t$ is even and $q_t(a^{3^{n_0}})\equiv 0\pmod{p^2}$;
\item $q_t'(1)\not\equiv 0\pmod p$.
\end{enumerate}
Then $f_3(a)$ is not badly approximable.
\end{theorem}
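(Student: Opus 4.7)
The plan is to mirror the proof of Theorem~\ref{th4} step by step, replacing $2$ by $3$ throughout and using Lemma~\ref{lem7} in place of the explicit convergent construction of~\cite{badziahin_zorin_2014}. By Proposition~\ref{proposition_equivalence} it suffices to show that $g_3(a)$ is not badly approximable. Fix the even index $t$ and the prime $p$ provided by the hypotheses, and let $d_t\tilde p_{t,n}(a)/d_t\tilde q_{t,n}(a)$, $n\in\N$, be the sequence of approximations of $g_3(a)$ coming from Lemma~\ref{lem7}. Because of the quadratic error bound supplied by that lemma, it is enough to exhibit an arbitrarily large common factor of $d_t\tilde p_{t,n}(a)$ and $d_t\tilde q_{t,n}(a)$. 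The formula~\eqref{eq_lem7} for $\tilde p_{t,n}(a)$ contains the product $\prod_{k=0}^{n-1}(a^{3^k}-1)$, and since $p\mid a^{3^{n_0}}-1$ divides every $a^{3^k}-1$ with $k\ge n_0$, this gives a factor $p^{n-n_0}$ in the numerator. The task is thereby reduced to showing that for every $m\in\N$ there exists $n\in\N$ with $p^m\mid q_t(a^{3^n})=\tilde q_{t,n}(a)$.

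The next step is Hensel lifting. Conditions~3 and~4 of the theorem let us apply Hensel's lemma to $q_t$ at the integer $a^{3^{n_0}}$ to obtain a root $x\in\ZZ_p$ of $q_t$ satisfying $x\equiv a^{3^{n_0}}\pmod{p^2}$, and in particular $x\equiv 1\pmod p$. It suffices to find $n$ with $a^{3^n}\equiv x\pmod{p^m}$. From condition~1 together with $p\ge 5$ the element $a^{3^{n_0}}$ is congruent to $1$ modulo $p$ but not modulo $p^2$, so (as in the proof of Theorem~\ref{th4}) it has order exactly $p^{m-1}$ in $(\ZZ/p^m\ZZ)^*$. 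Hence the set $\{a^{3^{n_0}s}\bmod p^m : s\in\N,\ \gcd(s,p)=1\}$ is exactly the set of residues modulo $p^m$ that are congruent to $1\pmod p$ but not to $1\pmod{p^2}$, so there is $s\in\N$ with $s\equiv 1\pmod p$ and $a^{3^{n_0}s}\equiv x\pmod{p^m}$.

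The final and most delicate step, where condition~\ref{theo_n3_condition_Weiferich} is used, is to realize the exponent $3^{n_0}s$ itself as an iterate $3^n$ modulo $p^{m-1}$. Since $s\equiv 1\pmod p$, the residue $3^{n_0}s\bmod p$ lies in $\Gamma(3,p)$; condition~\ref{theo_n3_condition_Weiferich} is exactly the hypothesis of Lemma~\ref{lemma_Gamma} for the base $3$, which yields $|\Gamma(3,p^{m-1})|=p^{m-2}|\Gamma(3,p)|$. Combined with the obvious surjectivity of the reduction $\Gamma(3,p^{m-1})\to\Gamma(3,p)$ with fibres of size at most $p^{m-2}$, this forces $\Gamma(3,p^{m-1})$ to coincide with the full preimage of $\Gamma(3,p)$ under $\RRR^*_{p^{m-1}}\to\RRR^*_p$. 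Consequently there is $n\in\N$ with $3^n\equiv 3^{n_0}s\pmod{p^{m-1}}$, and using once more that $a^{3^{n_0}}$ has order $p^{m-1}$ modulo $p^m$, this gives $a^{3^n}\equiv a^{3^{n_0}s}\equiv x\pmod{p^m}$, so $p^m\mid q_t(a^{3^n})$ as required. The main obstacle is precisely this last lifting: Hensel's lemma produces an arbitrary $p$-adic root, whereas the exponents realizable by iterating the Mahler equation~\eqref{eq3_lem3} are constrained to powers of $3$, and it is only the non-Wieferich-type condition on $3$ modulo $p$ that lets the two be reconciled.
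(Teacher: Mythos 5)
Your proposal is correct and is essentially the proof the paper intends: the paper itself omits the argument, stating only that it ``mostly repeats the steps of Theorem~\ref{th4}'', and your write-up carries out exactly that adaptation, substituting Lemma~\ref{lem7} for the $d=2$ convergent construction and applying Lemma~\ref{lemma_Gamma} with base $3$ in the final lifting step. The reduction to $g_3(a)$, the factor $p^{n-n_0}$ in the numerator, the Hensel lift using Conditions~3 and~4, the order-$p^{m-1}$ argument, and the use of Condition~\ref{theo_n3_condition_Weiferich} to realize $3^{n_0}s$ as a power of $3$ modulo $p^{m-1}$ all match the intended argument.
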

%\begin{remark} \label{rem_Weiferich_3}
{\noindent \bf Remark} Similarly to the remark to Theorem~\ref{th4},
we can note that condition~\ref{theo_n3_condition_Weiferich} of
Theorem~\ref{theo_n3} holds true for all the primes verifying
\begin{equation} \label{W_3}
3^{p-1}\not\equiv 1\pmod{p^2}.
\end{equation}
As far as the authors are aware, currently they know only two primes
failing~\eqref{W_3}, $11$ and $1006003$. It is also known that
all the other primes in the range $5\leq p <2^{32}$
verify~\eqref{W_3}, see~\cite{Montgomery1993}. So, for all primes in
the range $5\leq p < 2^{32}$ different from $11$ and $1006003$,
condition~\ref{theo_n3_condition_Weiferich} of Theorem~\ref{theo_n3}
holds true.
%\end{remark}
\begin{corollary}
The number $f_3(2)$ is not badly approximable.
%Moreover, for any $a\equiv 2 \; \mod 49$ the number $f_3(a)$ is not badly approximable.
Moreover, for any integer $a$ congruent modulo 49 to any number from
the set
$$
\{
2,4,8,9,11,15,16,22,23,25,29,32,36,37,39,43,44,46
\}
$$
the number $f_3(a)$ is not badly approximable.
\end{corollary}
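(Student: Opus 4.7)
\medskip\noindent\textbf{Proof proposal.} The plan is to invoke Theorem~\ref{theo_n3} uniformly with prime $p = 7$ and a single suitably chosen even index~$t$. Since $a = 2$ is one of the 18 residues in the displayed list, the first assertion is a special case of the second, and one application of the theorem will settle both. Condition~\ref{theo_n3_condition_Weiferich} is verified at once: $3^6 = 729 = 14\cdot 49 + 43$, so $3^{p-1}\not\equiv 1\pmod{p^2}$, and the $d=3$ analog of Lemma~\ref{lem_W} (obtained by replacing $2$ by $3$ in its proof verbatim) yields $|\Gamma(3,49)| = 7\,|\Gamma(3,7)| = 42$. In particular $3$ is a primitive root modulo~$49$.

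Second, I would exhibit an even index $t$ such that $q_t(8)\equiv 0\pmod{49}$ and $q_t'(1)\not\equiv 0\pmod{7}$, where $q_t$ is the monic denominator of the $t$-th convergent of $g_3(x)$. These polynomials are generated by the recursion of Theorem~\ref{th2}, with rational coefficients $\beta_k$ satisfying the relations assembled in Section~\ref{sec_d3}. In practice one computes the $\beta_k$ successively as rationals, clears denominators while checking that they remain coprime to~$7$, reduces the resulting polynomials modulo~$49$, and tabulates $q_t(8)\bmod 49$ together with $q_t'(1)\bmod 7$ for increasing even~$t$. This finite computer search is expected to be the main obstacle: there is no closed-form formula for the $\beta_k$ and no a priori bound on how far to search, but the analogous tables for $d = 2$ following Theorem~\ref{th4} suggest that a fairly small~$t$ will suffice.

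Third, I would verify that the 18 listed residues are precisely those $a\pmod{49}$ for which condition~1 of Theorem~\ref{theo_n3} is satisfied with $x = 8$. Since $3$ is a primitive root modulo~$49$, write $a\equiv 3^k\pmod{49}$; direct iteration of multiplication by $3^6 = 43$ inside the order-$7$ subgroup $H = \{y\in(\Z/49\Z)^* : y\equiv 1\pmod 7\}$ shows $3^{36}\equiv 8\pmod{49}$. The requirement $a^{3^{n_0}}\equiv 8\pmod{49}$ then becomes $k\cdot 3^{n_0}\equiv 36\pmod{42}$. Splitting $42 = 6\cdot 7$ by CRT, this is solvable in $n_0\geq 1$ exactly when $k$ is nonzero, even, and coprime to~$7$. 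Enumerating the 18 admissible values $k\in\{2,4,\ldots,40\}\setminus\{14,28\}$ and computing $3^k\bmod 49$ produces precisely the 18 residues stated in the corollary; for $a = 2$ one has $k = 26$ and $2^3\equiv 8\pmod{49}$, so $n_0 = 1$. Because $8\not\equiv 1\pmod{49}$, the chosen $n_0$ automatically satisfies $p\,\|\,a^{3^{n_0}}-1$, completing the verification of condition~1 and hence of all four hypotheses of Theorem~\ref{theo_n3}.
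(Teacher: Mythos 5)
Your overall strategy coincides with the paper's: both arguments are applications of Theorem~\ref{theo_n3} with $p=7$, and your verification of condition~\ref{theo_n3_condition_Weiferich} (via $3^6\equiv 43\pmod{49}$ together with the base-$3$ analogue of Lemma~\ref{lem_W}) is exactly what the paper's Remark provides. Your third step is, if anything, more transparent than the paper's: writing $a\equiv 3^k\pmod{49}$ and reducing condition~1 to a congruence for $k\cdot 3^{n_0}$ modulo $42$ correctly identifies the admissible residues as those with $k$ even and $7\nmid k$, which does reproduce the stated $18$-element set, and it also shows that this set does not depend on which element of $H\setminus\{1\}$ happens to be the root of $q_t$ modulo $49$. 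The paper instead simply tabulates a working $n_0$ for each of the $18$ residues.

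There is, however, a genuine gap at the heart of your second step: you never exhibit an even index $t$ for which conditions~3 and~4 actually hold, and without such a $t$ Theorem~\ref{theo_n3} yields nothing. The paper supplies this data explicitly: for $f_3(2)$ it takes $n_0=2$, $t=8$ with $q_8(x)=1+x^3+x^6+2x^9+2x^{12}$, and checks $q_8\bigl(2^{3^2}\bigr)\equiv 0\pmod{49}$ and $q_8'\bigl(2^{3^2}\bigr)\equiv 3\pmod 7$ (note the relevant root is $\equiv 22$, not $8$, modulo $49$, so your specific target $q_t(8)\equiv 0$ need not be the one that occurs); for the second assertion it uses $t=4$. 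Deferring this to ``a finite computer search'' with ``no a priori bound on how far to search'' leaves the decisive verification undone. Moreover, your proposed way of generating the $q_t$ --- computing the $\beta_k$ from the relations assembled in Section~\ref{sec_d3} --- cannot work as described, since the paper states explicitly that those relations are incomplete and do not determine the $\beta_k$; in practice one must run the continued fraction algorithm on a sufficiently long truncation of the Laurent series $g_3(x)$ and read off the denominators directly. Once an explicit admissible $t$ is in hand, the rest of your argument goes through.
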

\begin{proof}
With a bit of computational efforts we can find that Theorem~\ref{theo_n3} is
applicable with the
parameters $n_0=2$, $t=8$ and $p=7$. Indeed, in this case
$$
2^{3^2}-1\equiv 21 \; \mod 7^2,
$$
so Condition~1 of Theorem~\ref{theo_n3} is satisfied. Further,
Condition~2 of Theorem~\ref{theo_n3} is satisfied as well because of
the Remark (or alternatively it is easy to check straightforwardly
that $3$ is a primitive root modulo $7^2$).
%$$
%q_4(x)=1 + x^3 + x^6 + 2x^9 + 2x^{12}
%$$
Finally, $p_8(x)/q_8(x)$ is the convergent to $g_3(x)$ with
$$
q_8(x)=1 + x^3 + x^6 + 2x^9 + 2x^{12},
$$
hence
$$
q_8\left(2^{3^2}\right)\equiv 0 \; \mod 7^2
$$
and
$$
q_8'\left(2^{3^2}\right)\equiv 3 \; \mod 7,
$$
thus Conditions~3 and~4 of Theorem~\ref{theo_n3} are satisfied as well and we conclude that $f_3(2)$ is not badly approximable. This proves the first part of the corollary.

To prove the second part of the corollary, we also choose $t=4$, $p=7$ and choose $n_0$ according to the following table

\medskip

\begin{tabular}{|c|c|c|c|c|c|c|c|c|c|c|c|c|c|c|c|c|c|c|c|}
\hline
$a$ & 2 & 4 & 8 & 9 & 11 & 15 & 16 & 22 & 23 & 25 & 29 & 32 & 36 & 37 & 39 & 43 & 44 & 46 \\
\hline
$n_0$ & 2 & 6 & 1 & 5 & 2 & 5 & 4 & 6 & 6 & 5 & 3 & 3 & 2 & 3 & 4 & 4 & 1 & 1\\
\hline
\end{tabular}

\medskip

We leave verification of the details to the interested reader.
\end{proof}

\end{document}